\g@addto@macro\bfseries{\boldmath}
\newtheorem{base}{Base}[section]
\numberwithin{equation}{section}
\theoremstyle{plain}
\newtheorem{theorem}[base]{Theorem}
\newtheorem{lemma}[base]{Lemma}
\theoremstyle{definition}
\newtheorem{definition}[base]{Definition}
\newtheorem{remark}[base]{Remark}
\newcommand{\R}{\mathbb{R}}
\newcommand{\loc}{\mathrm{loc}}
\newcommand{\N}{\mathbb{N}}
\renewcommand{\d}{\,\mathrm{d}}
\newcommand{\laplace}{\Delta}
\newcommand{\defs}{\coloneqq}
\newcommand{\stext}[1]{\;\;\text{ #1 }\;\;}
\newcommand{\grad}{\nabla}
\renewcommand{\div}{\nabla \cdot}
\renewcommand{\L}[1]{{L^{#1}(\Omega)}}
\newcommand\numberthis{\addtocounter{equation}{1}\tag{\theequation}}
\begin{document}
	
\title{Global mass-preserving solutions for a two-dimensional chemotaxis system with rotational flux components coupled with a full Navier--Stokes equation}
\author{
	Frederic Heihoff\footnote{fheihoff@math.uni-paderborn.de}\\
	{\small Institut f\"ur Mathematik, Universit\"at Paderborn,}\\
	{\small 33098 Paderborn, Germany}
}
\date{}
\maketitle

\begin{abstract}
	\noindent We study the chemotaxis--Navier--Stokes system
	\[
	\left\{\;\;
	\begin{aligned}
		n_t + u \cdot \nabla n &\;\;=\;\; \Delta n - \nabla \cdot (nS(x,n,c) \nabla c), \;\;\;\;\;\; &&x\in \Omega, t > 0, \\
		c_t + u\cdot \nabla c &\;\;=\;\; \Delta c - n f(c), && x\in \Omega, t > 0, \\ 
		u_t + (u\cdot \nabla) u  &\;\;=\;\; \Delta u + \nabla P + n \nabla \phi, \;\;\;\;\;\; \nabla \cdot u = 0, \;\;\;\;\;\; && x\in \Omega, t > 0
	\end{aligned}
	\right. \tag{$\star$}	
	\]
	with no-flux boundary conditions for $n$, $c$ and Dirichlet boundary conditions for $u$ in a bounded, convex domain $\Omega \subseteq \mathbb{R}^2$ with a smooth boundary, which is motivated by recent modeling approaches from biology for aerobic bacteria suspended in a sessile water drop. We further do not assume the chemotactic sensitivity $S$ to be scalar as is common, but to be able to attain values in $\mathbb{R}^{2\times2}$, which for example allows for more complex modeling of bacterial behavior near the boundary of the water drop. This is seen as a potential source of the structure formation observed in experiments.
	\\[0.5em]
	While there have been various results for scalar chemotactic sensitivities $S$ due to a convenient energy-type inequality and some for the non-scalar case with only a Stokes fluid equation (or other strong restrictions) simplifying the analysis of the third equation in ($\star$) significantly, we consider the full combined case under little restrictions for the system giving us very little to go on in terms of a priori estimates. We nonetheless manage to still achieve sufficient estimates using Trudinger--Moser type inequalities to extend the existence results seen in a recent work by Winkler for the Stokes case with non-scalar $S$ to the full Navier--Stokes case. Namely, we construct a similar global mass-preserving generalized solution for ($\star$) in planar convex domains for sufficiently smooth parameter functions $S$, $f$ and  $\phi$ and only under the fairly weak assumptions that $S$ is appropriately bounded, $f$ is non-negative and $f(0) = 0$. 
	\\[0.5em]	
	\textbf{Keywords:} chemotaxis; Navier--Stokes; global existence; generalized solutions \\
	\textbf{MSC 2010:} 35D30 (primary): 35K55, 35Q92, 35Q35, 92C17 (secondary)
\end{abstract}
	
\section{Introduction}
We will study a mathematical model from biology describing chemotaxis, the directed movement of cells along a chemical gradient towards an attractant, and its interaction with the fluid said cells are suspended in. While pure chemotaxis models, which generally consist of two coupled partial differential equations, without added fluid interaction have been quite heavily studied for some time now (initiated by the seminal work of Keller and Segel in \cite{keller1970initiation}), recent findings by Dombrowski et al.\ (cf.\ \cite{PhysRevLett.93.098103}) have shown that in populations of \emph{Bacillus subtilis} much stronger liquid movement than could originate from independent bacteria can be observed after cell aggregates have been formed. This calls into question the prior modeling assumption that the liquid-cell interaction can basically be disregarded because each single organism has negligible influence on the liquid. This observation then led Tuval et al.\ (cf.\ \cite{Tuval2277}) to an extended modeling approach, which adds a full Navier--Stokes type fluid equation to the standard chemotaxis model. The central new interactions are that the attractant and the cells suspended in the solution are affected by convective forces exerted by the fluid, while the fluid is affected by buoyant forces exerted by the cells because of their comparatively large size.
\\[0.5em]
In mathematical terms and after some normalization of parameters, this modeling approach by Tuval et al.\ leads to a system of coupled partial differential equations of the following form:
\[
\left\{\;\;
\begin{aligned}
	n_t + u \cdot \grad n &\;\;=\;\; \laplace n - \div (nS(x,n,c) \grad c),  &&x\in \Omega, t > 0, \\
 	c_t + u\cdot \grad c &\;\;=\;\; \laplace c - n f(c), && x\in \Omega, t > 0, \\ 
 	u_t + (u\cdot \grad) u  &\;\;=\;\; \laplace u + \grad P + n \grad \phi, \;\;\;\;\;\; \div u = 0, \;\;\;\;\;\; && x\in \Omega, t > 0.
\end{aligned}
\right.
\numberthis \label{problem}
\]
Here $\Omega \subseteq \R^N$ with $N \in \N$ is some space domain and the functions $n = n(x,t)$, $c = c(x,t)$, $u = u(x,t)$, $P = P(x,t)$ model the bacteria population, the attractant (e.g.\ oxygen) concentration and the fluid velocity field and associated pressure respectively. The system is further parameterized by the given functions $S$, which models the chemotactic sensitivity of the bacteria, $f$, which models the attractant consumption rate, and $\phi$, which represents the gravitational potential. Convection is modeled by the terms $u \cdot \grad n$ and $u \cdot \grad c$, while buoyant forces are modeled by the term $n \grad \phi$. 
\\[0.5em]
This system is fairly well understood if we assume the chemotactic sensitivity $S$ to be a scalar function as this brings the system quite close to the classical Keller--Segel model in terms of the chemotactic interaction (cf.\ \cite{keller1970initiation}). Amongst many other things, global well-posedness of this system in two or three dimensions with different assumptions on the initial data and parameter functions $f$, $S$, $\phi$ has been extensively studied (cf.\ e.g.\ \cite{MR3208807}, \cite{MR2754058}, \cite{WinklerExistence}, \cite{MR2838394}). In some two dimensional settings, there are global, unique classical existence results available (cf.\ \cite{WinklerExistence}).
While in three dimensional cases there are still existence results available, they are somewhat less ambitious and deal with weaker notions of solutions and only consider eventual smoothness properties (cf.\ \cite{WinklerExistence}, \cite{MR3542616}, \cite{MR3605965}). For a more broad survey of mathematical chemotaxis models and recent results about them, consult for instance \cite{MR3351175}.

\paragraph{Main result.} The central goal of this paper is to expand the weak existence results seen in \cite{WinklerStokesCase} to our system (\ref{problem}) containing a full Navier--Stokes fluid equation as opposed to the simpler Stokes equation in the reference. Or put more precisely, we will consider the following setting: 
\\[0.5em]
We analyze the system (\ref{problem}) in a convex, bounded domain $\Omega \subseteq \R^2$ with smooth boundary. We then add the boundary conditions
\begin{equation}
	\grad n \cdot \nu = n(S(x,n,c)\grad c)  \cdot \nu, \;\; \grad c \cdot \nu = 0, \;\; u = 0 \;\;\;\; \forall x \in \partial\Omega, t > 0 \label{boundary_conditions}
\end{equation}
and initial conditions
\begin{equation}
	n(x, 0) = n_0(x),\;\;c(x,0) = c_0(x),\;\;u(x,0) = u_0(x) \;\;\;\;  \forall x\in\Omega
	\label{initial_data}
\end{equation}
for initial data with the following properties:
\begin{equation}
	\left\{\;
		\begin{aligned}
		n_0 &\in C^{\iota}(\overline{\Omega})  && \text{ for some } \iota > 0\text{ with } n_0 \geq 0 \text{ and } n_0 \not\equiv 0 \text{ in } \Omega, \\
		c_0 &\in W^{1,\infty}(\Omega) &&\text{ with } c_0 \geq 0 \text{ in } \Omega,  \\
		u_0 &\in D(A_2^\vartheta)&& \text{ for some } \vartheta \in (\tfrac{1}{2}, 1)
		\end{aligned}
	\right.
	\label{initial_data_props}
\end{equation}
Here $A_2$ denotes the Stokes operator in the Hilbert space $L^2_\sigma(\Omega) \defs \{ \varphi \in (L^2(\Omega))^2 \;|\; \div \varphi = 0 \}$ of all solenoidal functions in $(L^2(\Omega))^2$. Further $W_{0,\sigma}^{k,2}(\Omega) \defs (W_{0}^{k,2}(\Omega))^2 \cap L^2_\sigma(\Omega)$ for $k \in \N$ denote the corresponding Sobolev spaces of solenoidal functions. 
\\[0.5em]
For the functions $f, S$ and $\phi$ that parametrize (\ref{problem}), we will throughout this paper assume that
\begin{equation}
	f \in C^1([0,\infty)) \stext{is nonnegative with } f(0) = 0
	\label{f_regularity}
\end{equation}
and that, for $S = (S_{ij})_{i,j \in\{1,2\}}$,
\begin{equation}
	S_{ij} \in C^2(\overline{\Omega}\times[0,\infty)\times[0,\infty)) \;\;\text{ for } i,j \in \{1,2\}
	\label{S_regularity}
\end{equation}
and that
\begin{equation}
	|S(x,n,c)| \leq S_0(c) \;\;\;\; \forall (x,n,c) \in \overline{\Omega}\times[0,\infty)^2 \;\; \text{ for some nondecreasing } S_0: [0,\infty) \rightarrow [0, \infty)
	\label{S_0_bound}
\end{equation}
and that
\begin{equation}
	\phi \in W^{2,\infty}(\Omega) \label{phi_regularity}.
\end{equation}
Under all of these assumptions, we then show that (\ref{problem}) has a global mass-preserving generalized solution:
\begin{theorem}
If we assume that $f$, $S$, $\phi$ satisfy (\ref{f_regularity})--(\ref{phi_regularity}), then the system (\ref{problem}) with boundary conditions (\ref{boundary_conditions}) and initial data (\ref{initial_data}) of regularity class (\ref{initial_data_props}) has a global mass-preserving generalized solution $(n,c,u)$ in the sense of \Cref{definition:weak_solution} below.	
\end{theorem}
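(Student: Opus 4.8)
The plan is to follow the standard four-step scheme for constructing generalized solutions of chemotaxis--fluid systems: regularize to obtain global smooth approximants, derive $\varepsilon$-independent a priori bounds, extract convergent subsequences by compactness, and pass to the limit in a suitable weak formulation. First I would regularize the taxis flux and the fluid convection simultaneously: replacing $S$ by a truncated sensitivity $S_\varepsilon(x,n,c) \defs S(x,n,c)\,\rho_\varepsilon(n)$, with $\rho_\varepsilon$ cutting off large densities, renders the cross-diffusion bounded, and replacing $(u\cdot\grad)u$ by $(Y_\varepsilon u\cdot\grad)u$ with the Yosida approximation $Y_\varepsilon \defs (1+\varepsilon A_2)^{-1}$ tames the quadratic fluid nonlinearity. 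For fixed $\varepsilon$ a fixed-point argument combined with parabolic and Stokes smoothing should yield a unique global classical solution $(n_\varepsilon,c_\varepsilon,u_\varepsilon)$ with $n_\varepsilon\geq 0$ and, by the maximum principle applied to the second equation using $f\geq 0$ and $f(0)=0$, with $0\leq c_\varepsilon\leq\|c_0\|_{\L\infty}$. This $L^\infty$-bound on $c_\varepsilon$ is what makes the structural hypothesis (\ref{S_0_bound}) usable: since $S_0$ is nondecreasing, $|S_\varepsilon(\cdot,n_\varepsilon,c_\varepsilon)|\leq \bar S_0 \defs S_0(\|c_0\|_{\L\infty})$ uniformly.

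The cheap but essential bounds come next. Testing the first equation with $1$ and invoking the no-flux condition (\ref{boundary_conditions}) gives exact mass conservation $\|n_\varepsilon(\cdot,t)\|_{\L1}=\|n_0\|_{\L1}$ for all $t>0$, which will ultimately be inherited by the limit and is the origin of the mass-preserving property. Testing the fluid equation with $u_\varepsilon$ annihilates the (regularized) convective term, since $Y_\varepsilon$ preserves solenoidality, leaving the energy identity $\tfrac12\tfrac{d}{dt}\|u_\varepsilon\|_{\L2}^2+\|\grad u_\varepsilon\|_{\L2}^2=\int_\Omega n_\varepsilon\,\grad\phi\cdot u_\varepsilon$, which after Sobolev embedding and (\ref{phi_regularity}) furnishes bounds on $u_\varepsilon$ in $L^\infty_\loc((0,\infty);\L2)$ and on $\grad u_\varepsilon$ in $L^2_\loc$. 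The genuinely hard estimate is the entropy-type one: testing the first equation with $1+\ln n_\varepsilon$ (the convection term vanishes since $\grad(n_\varepsilon\ln n_\varepsilon)=(1+\ln n_\varepsilon)\grad n_\varepsilon$ and $\div u_\varepsilon=0$) and using Young's inequality yields
\[
\tfrac{d}{dt}\int_\Omega n_\varepsilon\ln n_\varepsilon + \tfrac12\int_\Omega\frac{|\grad n_\varepsilon|^2}{n_\varepsilon} \;\leq\; \tfrac{\bar S_0^{\,2}}{2}\int_\Omega n_\varepsilon|\grad c_\varepsilon|^2 ,
\]
so that everything hinges on controlling the coupling term $\int_\Omega n_\varepsilon|\grad c_\varepsilon|^2$. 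Because $S$ is matrix-valued, the cancellation available in the scalar case — where a quasi-energy functional closes automatically — is lost, and this term must be estimated head-on.

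Here the two-dimensional Trudinger--Moser inequality enters as the decisive tool, exactly as flagged in the abstract. The idea is to read $\int_\Omega n_\varepsilon|\grad c_\varepsilon|^2$ through the Orlicz duality between $L\log L$ and $\exp L$: the entropy bound controls $n_\varepsilon$ in $L\log L$, while a parallel estimate for $c_\varepsilon$ controls $|\grad c_\varepsilon|^2$ in $\exp L$. To obtain the latter I would test the second equation so as to track $\int_\Omega|\grad c_\varepsilon|^2$ together with the dissipation $\int_\Omega|\laplace c_\varepsilon|^2$, using the well-known fact that on a convex domain the boundary integral $\int_{\partial\Omega}\partial_\nu|\grad c_\varepsilon|^2$ arising from the identity $\grad c_\varepsilon\cdot\grad\laplace c_\varepsilon=\tfrac12\laplace|\grad c_\varepsilon|^2-|D^2c_\varepsilon|^2$ is nonpositive; this gives $H^1$-type control of $|\grad c_\varepsilon|$ and hence, via Trudinger--Moser, exponential integrability of $\lambda|\grad c_\varepsilon|^2$ for suitably small $\lambda$. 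Combining the two ingredients bounds $\int_\Omega n_\varepsilon|\grad c_\varepsilon|^2$ by a constant times $\bigl(1+\int_\Omega n_\varepsilon\ln n_\varepsilon\bigr)$ times a factor that the $\int_\Omega|\laplace c_\varepsilon|^2$-dissipation can absorb, after which a Gr\"onwall argument on the sum of the entropy and the $c$-functional closes the loop and delivers $\varepsilon$-independent bounds. I expect this coupling to be the principal obstacle: one must make the Trudinger--Moser constant quantitatively compatible with the dissipation and, crucially, control the extra terms $u_\varepsilon\cdot\grad c_\varepsilon$ and the consumption $-n_\varepsilon f(c_\varepsilon)$ in the $c$-functional using only the $L^2$-level bounds on $u_\varepsilon$ that the Navier--Stokes energy provides — which is precisely where the full fluid equation is harder than the Stokes case treated in \cite{WinklerStokesCase}.

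With these bounds in hand I would collect the resulting regularity: $n_\varepsilon$ bounded in $L^\infty_\loc((0,\infty);L\log L)$ and, via the space-time control of $\int\!\!\int|\grad n_\varepsilon|^2/n_\varepsilon$ together with interpolation, in $L^q_\loc$ for some $q>1$; $\grad c_\varepsilon$ bounded in $L^2_\loc$ and better; and $u_\varepsilon,\grad u_\varepsilon$ as above. Establishing time-regularity through dual-space estimates on $\partial_t n_\varepsilon$, $\partial_t c_\varepsilon$, $\partial_t u_\varepsilon$ then permits an Aubin--Lions argument extracting a subsequence along which $(n_\varepsilon,c_\varepsilon,u_\varepsilon)\to(n,c,u)$, with convergence strong enough to pass to the limit in every nonlinearity — in particular in $S_\varepsilon(\cdot,n_\varepsilon,c_\varepsilon)\grad c_\varepsilon$, in $n_\varepsilon f(c_\varepsilon)$, and in $(Y_\varepsilon u_\varepsilon\cdot\grad)u_\varepsilon\to(u\cdot\grad)u$ (strong $L^2_\loc$ convergence of $u_\varepsilon$ against weak convergence of $\grad u_\varepsilon$). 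Since the limiting regularity of $n$ is generally too weak for a standard distributional reading of the taxis term, the first equation is recovered only in the generalized (weak-supersolution supplemented by mass conservation) sense of \Cref{definition:weak_solution}; finally, passing $\int_\Omega n_\varepsilon(\cdot,t)=\int_\Omega n_0$ through the limit yields $\int_\Omega n(\cdot,t)=\int_\Omega n_0$ and hence the mass-preserving property, completing the construction.
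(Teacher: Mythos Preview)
Your proposal has the right ingredients but misidentifies where the main difficulty sits, and this leads to a genuine circularity. You call the fluid energy estimate ``cheap'': from $\tfrac12\tfrac{d}{dt}\|u_\varepsilon\|_{\L2}^2+\|\grad u_\varepsilon\|_{\L2}^2=\int_\Omega n_\varepsilon\grad\phi\cdot u_\varepsilon$ you say Sobolev embedding closes it. But in two dimensions $W^{1,2}_0$ does \emph{not} embed into $L^\infty$, so with only $\|n_\varepsilon\|_{\L1}=\|n_0\|_{\L1}$ available there is no way to bound the right-hand side by $\|\grad u_\varepsilon\|_{\L2}$ plus lower order. This is in fact the \emph{hard} step in the Navier--Stokes case, and it is precisely where the paper deploys Trudinger--Moser: one first obtains the weak gradient bound $\int_0^\infty\int_\Omega\frac{|\grad n_\varepsilon|^2}{(n_\varepsilon+1)^2}\leq C$ by testing the first equation with $\frac{1}{n_\varepsilon+1}$ (not $1+\ln n_\varepsilon$), which closes against the cheap bound $\int_0^\infty\int_\Omega|\grad c_\varepsilon|^2\leq C$ without any $n_\varepsilon$-weight appearing; a Trudinger--Moser consequence then converts this into space--time integrability of $(n_\varepsilon+1)\ln\big(\frac{n_\varepsilon+1}{\overline{n_0}+1}\big)$; and a second Trudinger--Moser inequality (applied with $\varphi=\grad\phi\cdot u_\varepsilon$, $\psi=n_\varepsilon+1$) bounds $\int_\Omega n_\varepsilon\grad\phi\cdot u_\varepsilon$ by this time-integrable quantity plus an absorbable multiple of $\int_\Omega|\grad u_\varepsilon|^2$.

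By contrast, your route through $1+\ln n_\varepsilon$ produces the coupling $\int_\Omega n_\varepsilon|\grad c_\varepsilon|^2$, and your proposed cure---exponential integrability of $|\grad c_\varepsilon|^2$ via the $\int_\Omega|\laplace c_\varepsilon|^2$ dissipation---does not close: testing the second equation with $-\laplace c_\varepsilon$ forces you to control $\int_\Omega n_\varepsilon f(c_\varepsilon)\laplace c_\varepsilon$, which after Young needs $\int_\Omega n_\varepsilon^2$, and after integration by parts reproduces either $\int_\Omega n_\varepsilon|\grad c_\varepsilon|^2$ or $\int_\Omega|\grad n_\varepsilon||\grad c_\varepsilon|$ weighted so as to require the very quantity you are trying to bound. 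This circularity is exactly the ``loss of energy structure'' for matrix-valued $S$ that the paper highlights; the point is not to recover a closed entropy--$c$ functional but to sidestep it via the weaker test function $\frac{1}{n_\varepsilon+1}$ and then spend Trudinger--Moser on the fluid source term instead. (Minor points: the Yosida regularization of the convection is unnecessary since the $\chi_\varepsilon$-truncation already gives $n_\varepsilon\in L^\infty$ and hence global classical approximants with the full Navier--Stokes equation; and the paper's regularization also includes a spatial cutoff $\rho_\varepsilon(x)$ to reduce the nonlinear no-flux condition to a plain Neumann one.)
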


\paragraph{Complications: Loss of energy structure and nonlinear convection.}
The above setting presents us with two key complications, one inherited from the Stokes case already discussed in \cite{WinklerStokesCase} and one we reintroduce from the original model by Tuval et al.
\\[0.5em]
The complication we inherited is of course the non-scalar chemotactic sensitivity $S$. Such $S$ are especially interesting from a biological point of view because scalar $S$ have been shown to lead to long time homogenization for some common parameters, which does not agree with the structure formation seen in experiments (cf.\ \cite{MR3149063}). As these observations further suggest that spatial inhomogeneities often originate at the boundary (cf.\ \cite{PhysRevLett.93.098103}), modern modeling approaches introduce rotational flux components near said boundary, which necessitate sensitivity functions that look somewhat like
\[
	S = a\left(\,\begin{matrix}1 & 0 \\ 0 & 1\end{matrix}\, \right) + b \left(\,\begin{matrix}0 & -1 \\ 1& 0\end{matrix} \,\right) \;\;\;\; \text{ for }a > 0, \; b \in \R
\]
with significant non-diagonal entries near the boundary (cf.\ \cite{MR3294964}, \cite{MR2505083}). The most devastating consequence of this for the analysis of the model is that this leads to the apparent loss of an energy type functional, which is present in the scalar case and often key to proving global existence of solutions (cf.\ \cite{MR2754058}, \cite{MR3369260}, \cite{WinklerExistence}, \cite{TemamNavierStokes}) or understanding their long time behavior (cf.\ \cite{MR3149063}, \cite{MR3605965}). Results dealing with non-scalar sensitivities therefore often analyze the system under some very strong restriction on either $S$ or the initial data (cf.\ \cite{MR3531759}, \cite{MR3401606}, \cite{MR3542964}).
\\[0.5em]
Even in the fluid-free version of (\ref{problem}), this loss 
of structure has led to a significant lack of knowledge and, e.g.\ for $N = 2$, global smooth solutions have thus far only been constructed under significant smallness conditions for $c_0$ (cf.\ \cite{MR3302296} or \cite{MR3562314} for an extension to the fluid case). For arbitrarily large data and dimension $N$, it is possible to construct global generalized solutions (not unlike those constructed in this paper) as seen in \cite{WinklerLargeDataGeneralized} at the very least.
\\[0.5em]
The complication we reintroduced is adding the nonlinear convection term, which is disregarded in \cite{WinklerStokesCase}, back into the third equation making it a full Navier--Stokes fluid equation. This means that especially the semigroup methods used in the Stokes case lose some of their effectiveness reducing our immediate access to regularity information for the third equation and making it in general much tougher to handle.

\paragraph{Main ideas.}
A lot of ideas from the Stokes case \cite{WinklerStokesCase} translate fairly immediately for the first two equations in (\ref{problem}), especially concerning the handling of the somewhat problematic chemotactic sensitivity $S$ as long as we still manage to provide similar bounds for $u$ as in the reference. Therefore it is in establishing these bounds where our key ideas come in. As already mentioned, semigroup techniques lose some of their fruitfulness in the third equation due to the newly introduced nonlinear convection term, but similarly to the Stokes case, the rather weak regularity information of the form
\[
	\int_0^\infty \int_\Omega \frac{|\grad n|^2}{(n+1)^2} \leq C
\]
in \Cref{lemma:basic_props} is of central importance yet again (In the Stokes case, it was mostly used to tease out integrability properties of the time derivative of $\ln(n + 1)$ and for some additional compactness properties). It in fact allows us in conjunction with some functional inequalities derived from the Trudinger--Moser inequality seen in \Cref{lemma:functional_ineq} to conclude that terms of the form
\[
	\int_\Omega (n + 1)\ln\left( \frac{n+1}{\overline{n} + 1} \right)
\]
and
\[
	\int_\Omega n \grad \phi \cdot u
\]
have similar time integrability properties. Time integrability of the former term proves useful to simplify some compactness results for $n$ that have already been shown in \cite{WinklerStokesCase}, while integrability of the latter term will be the central keystone to showing sufficient $L^2$ type bounds for $u$ and $\grad u$ (cf.\ \Cref{lemma:basic_u_bounds}) by testing the third equation with $u$ itself. While not as strong a set of bounds for $u$ as in the Stokes case, this then proves to be enough for the construction of global mass-preserving generalized solutions in the sense of \Cref{definition:weak_solution} below via a similar approximation approach as the one seen in \cite{WinklerStokesCase}.
 
\section{Generalized solution concept and approximate solutions}
As regularity information is rather hard to come by due to the complications outlined in the introduction, we will not endeavor to construct a global classical solution for (\ref{problem}), but instead confine ourselves to a very generalized notion of solution.
Because of the similarities to the pure Stokes case in \cite{WinklerStokesCase} and our desire to not unnecessarily duplicate effort, we let ourselves be guided by the generalized solution concept seen in said reference, which we of course slightly adapt to the full Navier--Stokes case. This reads as follows:
\begin{definition}
	\label{definition:weak_solution}
	We call a triple of functions 
	\begin{align*}
		&n \in L^\infty((0,\infty); L^1(\Omega)), \\
		&c \in L^\infty_\loc(\overline{\Omega}\times[0,\infty)) \cap L^2_\loc([0,\infty); W^{1,2}(\Omega)) \;\; \text{ and } \;\; \numberthis\label{wsol:regularity} \\
		&u \in L^\infty_\loc([0,\infty);(L^2(\Omega))^2) \cap L^2_\loc([0,\infty);(W_0^{1,2
		}(\Omega))^2)		
	\end{align*}
	with $n \geq 0$, $c \geq 0, \div u = 0$ a.e.\ in $\Omega\times(0,\infty)$,
	\begin{equation}
		\int_{\Omega} n(\cdot,t) = \int_{\Omega} n_0 \;\;\;\; \text{ for a.e.\ } t > 0 \label{wsol:mass_perservation}
	\end{equation}
	and 
	\begin{equation}
		\ln(n + 1) \in L^2_\loc([0,\infty);W^{1,2}(\Omega))
		\label{wsol:ln_n_regularity}
	\end{equation}
	a global mass-preserving generalized solution of (\ref{problem}), (\ref{boundary_conditions}), (\ref{initial_data}) and (\ref{initial_data_props}) if the inequality
	\begin{align*}
		-\int_0^\infty \int_{\Omega} \ln(n+1)\varphi_t - \int_{\Omega} \ln(n_0 + 1)\varphi(\cdot, 0) \geq& \int_0^\infty \int_{\Omega} \ln(n+1)\laplace\varphi + \int_0^\infty \int_{\Omega} |\grad \ln(n+1)|^2\varphi \\
		& - \int_0^\infty \int_{\Omega} \frac{n}{n+1} \grad \ln(n+1) \cdot (S(x,n,c)\grad c)\varphi \\
		& + \int_0^\infty \int_{\Omega} \frac{n}{n+1} (S(x,n,c) \grad c)\cdot \grad \varphi \\
		& + \int_0^\infty \int_\Omega \ln(n+1)(u\cdot \grad \varphi) \numberthis \label{wsol:ln_n_inequality}
	\end{align*}
	holds for all nonnegative $\varphi\in C_0^\infty(\overline{\Omega}\times[0,\infty))$ with $\grad \varphi \cdot \nu = 0$ on $\partial \Omega\times[0,\infty)$, if further
	\begin{equation}
		\int_0^\infty\int_{\Omega} c\varphi_t + \int_{\Omega} c_0\varphi(0, \cdot) = \int_0^\infty \int_\Omega \grad c \cdot \grad \varphi + \int_0^\infty \int_\Omega n f(c) \varphi - \int_0^\infty \int_\Omega c(u \cdot \grad \varphi) \label{wsol:c_equality}
	\end{equation}
	holds for all $\varphi \in L^\infty(\Omega\times(0,\infty)) \cap L^2((0,\infty); W^{1,2}(\Omega))$ having compact support in $\overline{\Omega}\times[0,\infty)$ with $\varphi_t \in L^2(\Omega\times(0,\infty))$, and if finally 
	\begin{equation}
		-\int_0^\infty \int_\Omega u \cdot \varphi_t - \int_\Omega u_0 \cdot \varphi(\cdot, 0) = -\int_0^\infty \int_\Omega \grad u \cdot \grad \varphi + \int_0^\infty \int_\Omega (u \otimes u)\cdot\grad \varphi + \int_0^\infty\int_\Omega n\grad\phi \cdot \varphi
		\label{wsol:u_equality}
	\end{equation}
	holds for all $\varphi \in C_0^\infty(\Omega\times[0,\infty); \R^2)$ with $\div \varphi = 0$ on $\Omega\times[0,\infty)$.
\end{definition} 
\begin{remark}
It can be shown that generalized solutions of this type become classical if the functions $n$, $c$ and $u$ are sufficiently regular. The argument for this can be sketched as follows: For the $u$ and $c$ components of the solution, this is fairly obvious as both satisfy the standard variational formulation of both of their equations, while the $n$ component presents somewhat more of a challenge as it only satisfies a very specific integral inequality and mass conservation property. That this is already sufficient has been for instance argued in \cite[Lemma 2.1]{WinklerLargeDataGeneralized} for the case $u \equiv 0$ and the argument transfers easily. 
\\[0.5em]
Furthermore at least in the Stokes case, some similar generalized solutions have been shown to eventually (from some large $t > 0$ onwards) attain such a level of regularity (cf.\ \cite{WinklerEventualSmoothness}).
\end{remark}
\noindent Similar to Reference \cite{WinklerStokesCase}, the key to our existence results will be approximate problems defined in the following way: We first fix families of functions $(\rho_\varepsilon)_{\varepsilon \in (0,1)}$ and $(\chi_\varepsilon)_{\varepsilon \in (0,1)}$ with 
\[
	\rho_\varepsilon \in C_0^\infty(\Omega) \stext{such that} 0 \leq \rho_\varepsilon \leq 1 \text{ in } \Omega \stext{and} \rho_\varepsilon \nearrow 1 \text{ pointwise in } \Omega \text{ as } \varepsilon \searrow 0
\]
and 
\[
	\chi_\varepsilon \in C_0^\infty([0,\infty)) \stext{such that} 0 \leq \chi_\varepsilon \leq 1 \text{ in } [0,\infty) \stext{and} \chi_\varepsilon \nearrow 1 \text{ pointwise in } [0,\infty) \text{ as } \varepsilon \searrow 0
\]
constructed by standard methods.
For $\varepsilon \in (0,1)$, we then define
\[
	S_\varepsilon(x,n,c) \defs \rho_\varepsilon(x) \chi_\varepsilon(n) S(x,n,c), \;\;\;\;\;\; \forall (x,n,c) \in \overline{\Omega}\times[0,\infty)^2
\]
and consider the following initial boundary value problem:
\[
\left\{
	\begin{aligned}
		{n_\varepsilon}_t + u_\varepsilon \cdot \grad n_\varepsilon \;=&\;\; \laplace n_\varepsilon - \div (n_\varepsilon S_\varepsilon(x, n_\varepsilon, c_\varepsilon)\grad c_\varepsilon), \;\;\;\;\;\;\; && x \in \Omega, \; t > 0,\\
		{c_\varepsilon}_t + u_\varepsilon \cdot \grad c_\varepsilon \;=&\;\; \laplace c_\varepsilon - n_\varepsilon f(c_\varepsilon), && x \in \Omega, \; t > 0,\\
		{u_\varepsilon}_t + (u_\varepsilon \cdot \grad ) u_\varepsilon \;=&\;\; \laplace u_\varepsilon + \grad P_\varepsilon + n_\varepsilon\grad \phi,  && x \in \Omega, \; t > 0, \\
		\div u_\varepsilon \;=&\;\; 0,  && x \in \Omega, \; t > 0, \\
		\grad n_\varepsilon \cdot \nu = \grad c_\varepsilon \cdot \nu = 0&, \;\; u_\varepsilon = 0, && x \in \partial\Omega, \; t > 0, \\
		n_\varepsilon(x,0) = n_0(x), \;\; &c_\varepsilon(x,0) = c_0(x), \;\; u_\varepsilon(x,0) = u_0(x), && x \in \Omega.
	\end{aligned}
	\numberthis \label{approx_system}
\right. 
\]
This kind of regularized version of (\ref{problem}) with (\ref{boundary_conditions}) and (\ref{initial_data}) then easily admits a global classical solution because it substitutes standard Neumann boundary conditions for the more complex no-flux boundary condition seen in (\ref{boundary_conditions}) and makes the first equation accessible to comparison arguments with a non-zero constant to gain a global upper bound for $n_\varepsilon$, which would be much harder to achieve otherwise. This of course only works under similar assumptions on the parameter functions $f$, $S$, $\phi$ as proposed in the introduction. As the techniques to achieve an existence result for the approximated system above are fairly well-documented and do not appreciably differ for our case in comparison to e.g.\ the case studied in \cite{WinklerExistence}, we will only give a short sketch to justify the following existence theorem:
\begin{theorem}
	For $\varepsilon \in (0,1)$, initial data of regularity (\ref{initial_data_props}) and $f$, $S$, $\phi$ satisfying (\ref{f_regularity})--(\ref{phi_regularity}), there exist functions
	\begin{align*}
		n_\varepsilon, c_\varepsilon &\in C^0(\overline{\Omega}\times[0,\infty))\cap C^{2,1}(\overline{\Omega}\times(0,\infty)), \\
		u_\varepsilon &\in C^0(\overline{\Omega}\times[0,\infty);\R^2)\cap C^{2,1}(\overline{\Omega}\times(0,\infty);\R^2) \\
		P_\varepsilon &\in C^{1,0}(\Omega\times(0,\infty))
	\end{align*}
	such that $n_\varepsilon > 0$, $c_\varepsilon \geq 0$ on $\overline{\Omega}\times(0,\infty)$ and $(n_\varepsilon, c_\varepsilon, u_\varepsilon, P_\varepsilon)$ is a classical solution of (\ref{approx_system}).
	\label{lemma:approx_existence}
\end{theorem}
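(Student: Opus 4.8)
\emph{Proof proposal.} The plan is to follow the well-trodden route for chemotaxis--(Navier--)Stokes systems: construct a local-in-time classical solution by a fixed-point argument, derive an extensibility criterion, and then rule out finite-time blow-up by means of a priori bounds that are made available precisely by the regularization built into \eqref{approx_system}. As announced before the statement, I would present these standard steps only in sketch form, focusing the detail on the point where the regularization is genuinely exploited.

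For the local theory I would first rewrite \eqref{approx_system} as a system of variation-of-constants (mild) equations, using the Neumann heat semigroup $(e^{t\Delta})_{t\geq 0}$ for the $n_\varepsilon$- and $c_\varepsilon$-components and the Stokes semigroup $(e^{-tA_2})_{t\geq 0}$ for $u_\varepsilon$, with the Helmholtz projection applied to the third equation to eliminate the pressure (which is afterwards recovered by de Rham's theorem to yield $P_\varepsilon \in C^{1,0}$). Working in the space $C([0,T]; C^0(\overline{\Omega})\times W^{1,q}(\Omega)\times D(A_2^\vartheta))$ for some fixed $q > 2$, a contraction mapping argument on a suitable closed ball would yield a unique mild solution on a short interval $[0,T)$; the smoothing estimates then provide enough Hölder regularity for parabolic and Stokes Schauder theory to upgrade this to the asserted classical regularity. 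Here the structural assumptions enter cleanly: $S_\varepsilon$ is smooth, bounded and compactly supported in $x$ by construction, $f \in C^1$, and the embedding $D(A_2^\vartheta) \hookrightarrow L^\infty(\Omega;\R^2)$, valid for $\vartheta > \tfrac{1}{2}$ in the planar setting, makes the quadratic convection term $(u_\varepsilon \cdot \nabla)u_\varepsilon$ and the drift terms $u_\varepsilon \cdot \nabla n_\varepsilon$, $u_\varepsilon \cdot \nabla c_\varepsilon$ locally Lipschitz in the chosen norms.

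A standard continuation argument then furnishes a maximal existence time $T_{\max}\in(0,\infty]$ together with the blow-up alternative that, if $T_{\max}<\infty$, the quantity $\|n_\varepsilon(\cdot,t)\|_{L^\infty(\Omega)} + \|c_\varepsilon(\cdot,t)\|_{W^{1,q}(\Omega)} + \|A_2^\vartheta u_\varepsilon(\cdot,t)\|_{L^2(\Omega)}$ must become unbounded as $t\nearrow T_{\max}$. To preclude this I would establish, in order: nonnegativity of $n_\varepsilon$ and $c_\varepsilon$ by the maximum principle (with $n_\varepsilon > 0$ in $\overline{\Omega}\times(0,\infty)$ via the strong maximum principle, since $n_0 \not\equiv 0$); the bound $\|c_\varepsilon(\cdot,t)\|_{L^\infty(\Omega)} \leq \|c_0\|_{L^\infty(\Omega)}$, which holds because $-n_\varepsilon f(c_\varepsilon)\leq 0$ (using $n_\varepsilon \geq 0$, $f\geq 0$) and $u_\varepsilon$ is solenoidal, so the constant $\|c_0\|_{L^\infty(\Omega)}$ is a supersolution of the second equation; then the time-global $L^\infty$ bound on $n_\varepsilon$ discussed below. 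With $c_\varepsilon$ and $u_\varepsilon$ controlled, I would bootstrap $\|\nabla c_\varepsilon\|_{L^q}$ and $\|A_2^\vartheta u_\varepsilon\|_{L^2}$ through a coupled Grönwall inequality assembled from the semigroup smoothing estimates, the forcing $n_\varepsilon\nabla\phi$ being bounded in $L^\infty$ thanks to $\phi\in W^{2,\infty}(\Omega)$. This contradicts the blow-up alternative and forces $T_{\max}=\infty$.

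The step I expect to be decisive is the global $L^\infty$ bound on $n_\varepsilon$, and it is exactly here that the design of \eqref{approx_system} pays off. Since $\chi_\varepsilon$ has compact support in $[0,\infty)$, there is $N_\varepsilon>0$ with $\chi_\varepsilon(n)=0$ for all $n\geq N_\varepsilon$, so the taxis flux $n_\varepsilon S_\varepsilon(x,n_\varepsilon,c_\varepsilon)\nabla c_\varepsilon$ vanishes identically once $n_\varepsilon\geq N_\varepsilon$. Consequently the constant $M \defs \max\{\|n_0\|_{L^\infty(\Omega)},N_\varepsilon\}$ is a supersolution of the first equation: it satisfies the now genuinely homogeneous Neumann condition $\nabla M\cdot\nu=0$ and annihilates the taxis contribution, leaving only the convection--diffusion balance $\partial_t M + u_\varepsilon\cdot\nabla M - \Delta M = 0$. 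A comparison argument, legitimate because the nonlinearity is switched off in the relevant range and $u_\varepsilon$ is divergence-free, then yields $n_\varepsilon\leq M$ uniformly in time. This is precisely the bound that the no-flux boundary condition of the original problem \eqref{problem} would obstruct and that the replacement by Neumann conditions, together with the spatial cut-off $\rho_\varepsilon$ and the amplitude cut-off $\chi_\varepsilon$, is engineered to restore. Since the remaining estimates differ only cosmetically from the treatment in \cite{WinklerExistence}, I would carry them out only to the extent needed to close the continuation argument.
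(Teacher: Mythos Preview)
Your proposal is correct and follows essentially the same route as the paper: local existence via a contraction mapping in the same function space, the same blow-up criterion, positivity and nonnegativity by the maximum principle, the crucial $L^\infty$ bound on $n_\varepsilon$ via comparison with a constant exploiting the compact support of $\chi_\varepsilon$, and then bootstrapping the $c_\varepsilon$- and $u_\varepsilon$-norms by reference to \cite{WinklerExistence}. The only nuance the paper adds that you do not mention explicitly is that one intermediate step in \cite{WinklerExistence} relies on an energy inequality unavailable here, so the bound on $\int_\Omega |\nabla c_\varepsilon|^2$ must instead be obtained directly by testing the second equation with $c_\varepsilon$; this is minor and fits within your stated bootstrap.
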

\begin{proof}
	Standard contraction mapping approaches in an appropriate setting (as e.g.\ seen in \cite[Lemma 2.1]{WinklerExistence} for a similar system) provide us with a classical solution for (\ref{approx_system}) on a space time cylinder $\Omega\times[0,T_{\text{max}, \varepsilon})$ with some maximal $T_{\text{max}, \varepsilon} \in (0,\infty]$ and a blow-up criterion of the following type:
	\[
		\text{If } T_{\text{max}, \varepsilon} < \infty, \text{ then } \limsup_{t\nearrow T_{\text{max}, \varepsilon}} \left( \; \|n_\varepsilon(\cdot, t)\|_{L^\infty(\Omega)} +\|c_\varepsilon(\cdot,t)\|_{W^{1,q}(\Omega)} + \|A_2^\alpha u_\varepsilon(\cdot,t)\|_{L^2(\Omega)} \; \right) = \infty.
	\]
	Here $q$ is some number greater than $2$. Non-negativity and positivity on $\overline{\Omega}\times(0,\infty)$ of $c_\varepsilon$ and $n_\varepsilon$ respectively are immediately ensured by maximum principle.
	Note now further that, because we defined the $S_\varepsilon$ to be zero for sufficiently large values of $n$, well-known comparison arguments can be used to already gain a global upper bound for $n_\varepsilon$ on the whole cylinder. This already rules out blowup regarding $n_\varepsilon$. As the second equation in (\ref{approx_system}) is generally fairly unproblematic, similar boundedness results can be achieved for $c_\varepsilon$ (cf.\ \Cref{lemma:basic_props} later in this paper). Regarding the possible blowup of $c_\varepsilon$ or $u_\varepsilon$, we can look at the prior work done in Section 4.2 of Reference \cite{WinklerExistence}, where it is proven for much weaker prerequisites as already established here that the two norms in the blowup criteria regarding $c_\varepsilon$ and $u_\varepsilon$ respectively are bounded as well. Note that this is mostly done using the second and third equation of the system studied in said reference, which are apart from some slight generalizations the same as the second and third equation in (\ref{approx_system}). Only one step in the reference uses an energy inequality not available to us to establish a bound for $\int_\Omega |\grad c_\varepsilon|^2$, which is also easily gained by a straightforward testing procedure for the second equation in (\ref{approx_system}) without using said energy inequality.
\end{proof}\noindent
For the rest of this paper, we now fix initial values $(n_0, c_0, u_0)$ of regularity class (\ref{initial_data_props}) and parameter functions $f$, $S$ and $\phi$ satisfying (\ref{f_regularity})--(\ref{phi_regularity}). We then further fix a corresponding family of solutions $(n_\varepsilon, c_\varepsilon, u_\varepsilon, P_\varepsilon)_{\varepsilon \in (0,1)}$ according to \Cref{lemma:approx_existence}.
\section{A priori estimates for the approximate solutions}
\subsection{Results for $n_\varepsilon$ and $c_\varepsilon$ reusable from the Stokes case}
Let us now start by revisiting some key results for the approximate solutions $n_\varepsilon, c_\varepsilon$, which can be derived in a very similar way to the Stokes case (cf.\ \cite{WinklerStokesCase}) as they stem from just considering the first two equations in (\ref{approx_system}). 
\begin{lemma}
\label{lemma:basic_props}
Let $\varepsilon \in (0,1)$. The mass conservation equality
\begin{equation}
	\int_\Omega n_\varepsilon(x,t) \d x  = \int_\Omega n_0 \label{eq:mass_perservation}
\end{equation}
holds for all $t > 0$ and, for each $p \in [1,\infty]$, the inequality
\begin{equation}
	\|c_\varepsilon(\cdot, t)\|_\L{p} \leq \|c_\varepsilon(\cdot, s)\|_\L{p}
	\label{eq:c_monoticity}
\end{equation}
holds for $t \geq s \geq 0$. We further have that 
\begin{equation}
	\int_0^\infty\int_\Omega |\grad c_\varepsilon|^2 \leq \frac{1}{2}\int_\Omega c_0^2
	\label{eq:grad_c_bound}
\end{equation}
and 
\begin{equation}
	\int_0^\infty\int_\Omega \frac{|\grad n_\varepsilon|^2}{(n_\varepsilon + 1)^2} \leq 2\int_\Omega n_0 + S_0^2(\|c_0\|_\L{\infty})\int_\Omega c_0^2 \label{eq:weak_grad_n_smallness}
\end{equation}
with $S_0$ as in (\ref{S_0_bound}).
\end{lemma}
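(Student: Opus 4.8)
The four assertions are of increasing difficulty, and I would prove them in the order stated, each via a standard testing procedure on the first two equations of the approximate system (\ref{approx_system}), exploiting the Neumann boundary conditions and the fact that $u_\varepsilon$ is solenoidal with $u_\varepsilon = 0$ on $\partial\Omega$.

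For the mass conservation (\ref{eq:mass_perservation}), I would integrate the first equation over $\Omega$. The diffusion term $\int_\Omega \laplace n_\varepsilon$ and the chemotactic term $-\int_\Omega \div(n_\varepsilon S_\varepsilon \grad c_\varepsilon)$ both vanish after integration by parts because of the homogeneous Neumann condition $\grad n_\varepsilon \cdot \nu = 0$ together with the divergence structure. For the convective term, since $\div u_\varepsilon = 0$ and $u_\varepsilon = 0$ on $\partial\Omega$, I would write $\int_\Omega u_\varepsilon \cdot \grad n_\varepsilon = \int_\Omega \div(n_\varepsilon u_\varepsilon) = 0$. Hence $\frac{d}{dt}\int_\Omega n_\varepsilon = 0$, giving (\ref{eq:mass_perservation}). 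The monotonicity (\ref{eq:c_monoticity}) for $p \in [1,\infty)$ follows from testing the second equation with $c_\varepsilon^{p-1}$ (or a regularized version, then passing to the limit): the convection term again drops by the solenoidal/zero-boundary property, the diffusion term contributes a nonpositive quantity $-(p-1)\int_\Omega c_\varepsilon^{p-2}|\grad c_\varepsilon|^2$, and the sink $-n_\varepsilon f(c_\varepsilon)$ is nonpositive since $n_\varepsilon > 0$ and $f \geq 0$; dividing out and integrating in time yields the claimed decay of $\|c_\varepsilon\|_{L^p}$. The case $p = \infty$ then follows by letting $p \to \infty$, or directly from a maximum-principle/comparison argument using $f \geq 0$.

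For (\ref{eq:grad_c_bound}), I would test the second equation with $c_\varepsilon$ itself. The convection term vanishes as before; the sink term $-\int_\Omega n_\varepsilon f(c_\varepsilon) c_\varepsilon \leq 0$ is discarded; and integration by parts on the Laplacian produces $-\int_\Omega |\grad c_\varepsilon|^2$. This gives $\frac{1}{2}\frac{d}{dt}\int_\Omega c_\varepsilon^2 \leq -\int_\Omega |\grad c_\varepsilon|^2$, and integrating over $(0,\infty)$ together with nonnegativity of the left endpoint yields $\int_0^\infty\int_\Omega |\grad c_\varepsilon|^2 \leq \frac{1}{2}\int_\Omega c_0^2$.

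The main obstacle is (\ref{eq:weak_grad_n_smallness}), which I expect to be the crux. Here I would test the first equation with $\frac{1}{n_\varepsilon + 1}$ (equivalently, differentiate the functional $\int_\Omega \ln(n_\varepsilon + 1)$). The diffusion term, after integration by parts, produces exactly the desired good quantity $+\int_\Omega \frac{|\grad n_\varepsilon|^2}{(n_\varepsilon+1)^2}$. The convective term $\int_\Omega \frac{u_\varepsilon \cdot \grad n_\varepsilon}{n_\varepsilon + 1} = \int_\Omega u_\varepsilon \cdot \grad \ln(n_\varepsilon+1) = -\int_\Omega (\div u_\varepsilon)\ln(n_\varepsilon+1) = 0$ again vanishes by the solenoidality and boundary vanishing of $u_\varepsilon$. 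The delicate term is the chemotactic contribution: integrating by parts it becomes something like $-\int_\Omega \frac{n_\varepsilon}{(n_\varepsilon+1)^2}\grad n_\varepsilon \cdot (S_\varepsilon \grad c_\varepsilon)$, which I would estimate by Young's inequality, splitting off a multiple of the good gradient term $\int_\Omega \frac{|\grad n_\varepsilon|^2}{(n_\varepsilon+1)^2}$ (absorbed into the left side) against a remainder controlled by $|S_\varepsilon|^2|\grad c_\varepsilon|^2$. Using the bound (\ref{S_0_bound}) with the monotonicity of $S_0$ and (\ref{eq:c_monoticity}) to estimate $|S_\varepsilon| \leq |S| \leq S_0(\|c_\varepsilon\|_{L^\infty}) \leq S_0(\|c_0\|_{L^\infty})$, the remainder is bounded by a constant times $\int_\Omega |\grad c_\varepsilon|^2$. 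Integrating the resulting differential inequality over $(0,\infty)$ and inserting the already-established bound (\ref{eq:grad_c_bound}), together with the fact that $\int_\Omega \ln(n_\varepsilon(\cdot,t)+1)$ is controlled by the conserved mass via $\ln(n_\varepsilon+1) \leq n_\varepsilon$ and hence $\int_\Omega \ln(n_\varepsilon+1) \leq \int_\Omega n_0$, yields the stated bound with the explicit constant $2\int_\Omega n_0 + S_0^2(\|c_0\|_{L^\infty})\int_\Omega c_0^2$. Tracking the Young's-inequality constants so that exactly the factor $\frac{1}{2}$ multiplying (\ref{eq:grad_c_bound}) produces the coefficient on the $\int_\Omega c_0^2$ term is the one bookkeeping step requiring care.
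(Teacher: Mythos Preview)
Your proposal is correct and follows essentially the same approach as the paper: integrate the first equation for mass conservation, test the second equation with $c_\varepsilon^{p-1}$ (the case $p=2$ giving (\ref{eq:grad_c_bound})), and test the first equation with $\tfrac{1}{n_\varepsilon+1}$, then apply Young's inequality together with the bound $|S_\varepsilon|\le S_0(\|c_0\|_{L^\infty})$ and (\ref{eq:grad_c_bound}) to obtain (\ref{eq:weak_grad_n_smallness}). The paper's proof is slightly terser but the steps and the use of $\ln(n_\varepsilon+1)\le n_\varepsilon$ combined with mass conservation to close the estimate are exactly as you describe.
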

\begin{proof}
The mass conservation property (\ref{eq:mass_perservation}) is immediately evident after integration of the first equation in (\ref{approx_system}). Further, testing the second equation in (\ref{approx_system}) with $c_\varepsilon^{p-1}$ for $p \in [1,\infty)$ gives us 
\[
	\frac{1}{p}\int_\Omega c^p_\varepsilon(\cdot, t) + (p-1)\int_0^t \int_\Omega c_\varepsilon^{p-2} |\grad c_\varepsilon|^2 + \int_0^t \int_\Omega n_\varepsilon c_\varepsilon^{p-1} f(c_\varepsilon) = \frac{1}{p} \int_\Omega c_0^p  \;\;\;\; \forall t > 0,
\]
which for fitting choices of $p$ yields (\ref{eq:c_monoticity}) for finite $p$ as well as (\ref{eq:grad_c_bound}). The case $p = \infty$ in (\ref{eq:c_monoticity}) then follows via the limit process $p \rightarrow \infty$. To now derive (\ref{eq:weak_grad_n_smallness}), we test the first equation in (\ref{approx_system}) with $\frac{1}{n_\varepsilon + 1}$ to obtain
\[
	\frac{\d }{\d t} \int_\Omega \ln(n_\varepsilon + 1) = \int_\Omega \frac{|\grad n_\varepsilon|^2}{(n_\varepsilon + 1)^2} - \int_\Omega \frac{n_\varepsilon}{(n_\varepsilon + 1)^2} \grad n_\varepsilon \cdot S_\varepsilon(x, n_\varepsilon, c_\varepsilon) \grad c_\varepsilon \;\;\;\; \forall t > 0,
\]
which we then further improve to
\[
	\frac{1}{2}\int_\Omega \frac{|\grad n_\varepsilon|^2}{(n_\varepsilon + 1)^2} \leq \frac{\d}{\d t} \int_\Omega \ln(n_\varepsilon + 1) + \frac{S_0^2(\|c_0\|_\L{\infty})}{2} \int_\Omega|\grad c_\varepsilon|^2 \;\;\;\; \forall t > 0
\]
by application of Young's inequality,  (\ref{S_0_bound}) and (\ref{eq:c_monoticity}). Time integration of the above in combination with (\ref{eq:grad_c_bound}) then results in the desired inequality (\ref{eq:weak_grad_n_smallness}).
\end{proof}
\subsection{Estimates for $u_\varepsilon$ based on the Trudinger--Moser inequality}
While semigroup methods proved very fruitful when the third equation in (\ref{approx_system}) is of Stokes type, they are in our case thoroughly thwarted by the nonlinear convection term $(u_\varepsilon \cdot \grad) u_\varepsilon$. As such, we will use very different tools to at least partially recover some of the $L^p$ boundedness results for $u_\varepsilon$ and its derivatives seen in Lemma 3.3 of Reference \cite{WinklerStokesCase} for the Stokes case. To fill the role of these tools, we therefore start by deriving two functional inequalities based on the Trudinger--Moser inequality (pioneered in \cite{trudinger1967imbeddings}, \cite{moser1971sharp}) and inspired by \cite{WinklerMoserTrudingerFluidInteraction}:
\begin{lemma}
	\label{lemma:functional_ineq}
	There exists a constant $C > 0$ such that
	\begin{equation}
		\int_\Omega \varphi (\psi - \overline{\psi}) \leq \frac{1}{a} \left[\;\int_\Omega \psi \ln\left(\frac{\;\psi\;}{\overline{\psi}}\right)  + C\int_\Omega \psi \; \right] + \frac{a}{8\pi}\left\{\int_\Omega \psi\right\} \int_\Omega |\grad \varphi|^2  \label{eq:fucntional_ineq_1}
	\end{equation}
    and
	\begin{equation}
		\int_\Omega \psi \ln\left(\frac{\;\psi\;}{\overline{\psi}}\right) \leq \frac{1}{2\pi} \left\{\int_\Omega \psi \right\}\int_\Omega \frac{|\grad \psi|^2}{\psi^2} + C\int_\Omega \psi
		\label{eq:fucntional_ineq_2}
	\end{equation}
	with $\overline{\psi} \defs \frac{1}{|\Omega|} \int_\Omega \psi$ for all $\varphi \in C^1(\overline{\Omega})$, positive $\psi \in C^1(\overline{\Omega})$ and $a > 0$.
\end{lemma}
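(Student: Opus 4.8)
The plan is to derive both inequalities from the Trudinger--Moser inequality on the bounded domain $\Omega \subseteq \R^2$, which in its most useful form states that there is a constant $C_{TM} > 0$ such that
\[
	\int_\Omega e^{w} \leq C_{TM} \exp\left( \frac{1}{16\pi}\int_\Omega |\grad w|^2 + \frac{1}{|\Omega|}\int_\Omega w \right)
\]
for all $w \in W^{1,2}(\Omega)$. (The precise constant $\frac{1}{16\pi}$ is the sharp one on a two-dimensional domain with this normalization; it is exactly this sharp exponent that produces the $\frac{1}{8\pi}$ and $\frac{1}{2\pi}$ factors below.) For \eqref{eq:fucntional_ineq_1}, I would start from the elementary Young-type inequality $XY \leq X\ln X - X + e^{Y}$, valid for $X > 0$ and $Y \in \R$, which is the duality between the functions $s \mapsto s\ln s - s$ and $t \mapsto e^t$. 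The idea is to apply this pointwise with $X$ proportional to $\psi$ and $Y$ proportional to $\varphi - \overline{\varphi}$ (subtracting the mean of $\varphi$ is harmless on the left-hand side since $\int_\Omega \overline{\psi}\,(\varphi - \overline{\psi})$ can be reorganized, and centering is what lets the Trudinger--Moser estimate be applied with $\int_\Omega w$ under control). After integrating over $\Omega$, the $X\ln X$ term yields the entropy expression $\int_\Omega \psi \ln(\psi/\overline{\psi})$ plus lower-order contributions absorbed into $C\int_\Omega \psi$, while the $e^{Y}$ term is estimated by Trudinger--Moser, whose gradient term $\int_\Omega |\grad \varphi|^2$ appears with exactly the constant $\frac{a}{8\pi}\{\int_\Omega \psi\}$ once the scaling of $Y$ by the parameter $a$ and the mass factor $\int_\Omega \psi$ are tracked carefully.

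For \eqref{eq:fucntional_ineq_2}, I would specialize: this is the logarithmic Sobolev / entropy inequality that bounds the entropy $\int_\Omega \psi \ln(\psi/\overline{\psi})$ by the Fisher-information-type quantity $\int_\Omega |\grad \psi|^2/\psi^2 = \int_\Omega |\grad \ln \psi|^2$. The natural route is to set $w = \ln(\psi/\overline{\psi})$ (equivalently, test \eqref{eq:fucntional_ineq_1} with the choice $\varphi = \ln\psi$ up to constants) and again invoke Trudinger--Moser, now reading off that $\int_\Omega \psi \ln(\psi/\overline{\psi})$ is controlled by $\frac{1}{2\pi}\{\int_\Omega \psi\}\int_\Omega |\grad \ln\psi|^2$ plus a multiple of $\int_\Omega \psi$. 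The factor $\frac{1}{2\pi}$ versus $\frac{1}{8\pi}$ in the two statements reflects the different normalization forced by choosing $w = \ln(\psi/\overline{\psi})$ rather than an independent test function; I would keep scrupulous track of these constants since they are the crux of the later borderline estimates.

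The main obstacle I anticipate is not any single deep step but the bookkeeping that makes the sharp constants $\frac{1}{8\pi}$ and $\frac{1}{2\pi}$ come out correctly, together with justifying the passage to the mean-zero setting. Concretely, the sharp Trudinger--Moser constant is $\frac{1}{16\pi}$ for the full gradient on $\Omega$, and one must verify that after centering $\varphi$ (or $\ln\psi$) and after the rescaling by $a$ the resulting coefficient lands on the claimed value rather than a worse one; this is where an off-by-a-factor-of-two error would silently break the downstream analysis. A secondary technical point is ensuring the Young-type duality is applied at the right scale so that \emph{both} the entropy term and the remainder $C\int_\Omega \psi$ absorb the correct lower-order pieces, and confirming that the single constant $C$ can indeed be chosen uniformly, independent of $\varphi$, $\psi$ and $a$, as the statement demands. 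The regularity hypotheses ($\varphi \in C^1(\overline{\Omega})$ and positive $\psi \in C^1(\overline{\Omega})$) are comfortably strong enough that all integrals and the substitution $w = \ln(\psi/\overline{\psi})$ are classically well-defined, so I expect no difficulty on that front.
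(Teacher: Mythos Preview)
Your overall strategy---Trudinger--Moser plus an entropy/exponential duality, then specialize $\varphi = \ln(\psi/\overline{\psi})$ for the second inequality---is exactly the route the paper takes. But two concrete points in your plan would not go through as written.

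First, the Trudinger--Moser constant $\tfrac{1}{16\pi}$ you quote is the sharp one for $W_0^{1,2}(\Omega)$ (coming from Moser's exponent $4\pi$). Here $\varphi \in C^1(\overline{\Omega})$ carries no boundary condition, so the relevant version is the mean-zero (Neumann-type) inequality, whose sharp exponent on a smoothly bounded planar domain is $2\pi$, not $4\pi$; this yields $\int_\Omega e^{\xi} \le K_1\exp\bigl(\tfrac{1}{8\pi}\int_\Omega|\nabla\xi|^2 + \overline{\xi}\bigr)$. The paper extracts this from the Chang--Yang form $\int_\Omega e^{2\pi\xi^2}\le K_1$ (for mean-zero $\xi$ with $\|\nabla\xi\|_{L^2}\le 1$) via the pointwise splitting $|\xi-\overline{\xi}| \le 2\pi\bigl(\tfrac{\xi-\overline{\xi}}{\|\nabla\xi\|_{L^2}}\bigr)^2 + \tfrac{1}{8\pi}\|\nabla\xi\|_{L^2}^2$. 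With $\tfrac{1}{16\pi}$ you would land on $\tfrac{a}{16\pi}$ in \eqref{eq:fucntional_ineq_1}, a stronger inequality than is actually true; this is precisely the off-by-two you worried about.

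Second, and more structurally, the pointwise Fenchel--Young inequality $XY \le X\ln X - X + e^{Y}$ with $X\sim\psi$, $Y=a\varphi$ leaves you, after integration, with a term of the form $\tfrac{\lambda}{a}\int_\Omega e^{a\varphi}$. Trudinger--Moser then bounds this by $\tfrac{C\lambda}{a}\exp\bigl(\tfrac{a^2}{8\pi}\int_\Omega|\nabla\varphi|^2\bigr)$, which is \emph{exponential} in the Dirichlet energy, not linear as the lemma demands. The paper avoids this by using Jensen's inequality with the probability measure $\tfrac{\psi}{m}\,dx$, $m=\int_\Omega\psi$: writing $\ln\int_\Omega e^{a\varphi} = \ln\int_\Omega e^{a\varphi}\tfrac{m}{\psi}\cdot\tfrac{\psi}{m}$ and invoking concavity of the logarithm gives
\[
\int_\Omega\varphi\psi \;\le\; \frac{1}{a}\int_\Omega\psi\ln\frac{\psi}{m} \;+\; \frac{m}{a}\ln\int_\Omega e^{a\varphi},
\]
and now the \emph{logarithm} of the Trudinger--Moser bound is linear in $\int_\Omega|\nabla\varphi|^2$, producing exactly $\tfrac{am}{8\pi}\int_\Omega|\nabla\varphi|^2$. (Your Young approach can in fact be rescued, but only by introducing a free normalization $X=\psi/\lambda$ and optimizing over $\lambda$; the minimizer is $\lambda = m/\int_\Omega e^{a\varphi}$, and the optimized inequality \emph{is} Jensen.) For \eqref{eq:fucntional_ineq_2} the paper then inserts $\varphi=\ln(\psi/\overline{\psi})$ and $a=2$ into \eqref{eq:fucntional_ineq_1} and discards the $\overline{\psi}\int_\Omega\ln(\psi/\overline{\psi})$ term using $\int_\Omega\ln(\psi/\overline{\psi})\le 0$ (Jensen again); this specialization is what you describe and works once \eqref{eq:fucntional_ineq_1} is in hand.
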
  
\begin{proof}
	By using the Trudinger--Moser inequality seen in Proposition 2.3 of Reference \cite{chang1988conformal}, which is applicable because $\Omega$ is convex and therefore finitely connected, we gain a constant $K_1 \geq |\Omega|$ with
	\begin{equation}
		\int_\Omega e^{2\pi\xi^2} \leq K_1 \label{eq:moser-trudinger-raw}
	\end{equation}
	for all $\xi\in C^1(\overline{\Omega})$ with $\int_\Omega \xi = 0$ and $\int_\Omega |\grad \xi|^2 \leq 1$. Note that we can choose the constant $\beta$ seen in Proposition 2.3 from the reference equal to $2\pi$ because $\Omega$ has a smooth boundary. Using Young's inequality to see that
	\[
		\xi - \overline{\xi} \leq |\xi - \overline{\xi}| \leq 2\pi\left(\frac{\xi - \overline{\xi}}{\|\grad \xi\|_\L{2}}\right)^2 + \frac{1}{8\pi} \int_\Omega |\grad \xi|^2
	\]
	with $\overline{\xi} \defs \frac{1}{|\Omega|} \int_\Omega \xi$, the inequality in (\ref{eq:moser-trudinger-raw}) directly implies that
	\[
		\int_\Omega e^{\xi-\overline{\xi}} \leq K_1\exp\left( \frac{1}{8\pi} \int_\Omega |\grad \xi|^2 \right)
	\]
	or further that
	\begin{equation}
		\int_\Omega e^{\xi} \leq K_1\exp\left( \frac{1}{8\pi} \int_\Omega |\grad \xi|^2 + \frac{1}{|\Omega|}\int_\Omega \xi \right) \label{eq:moser-trudinger-cons}
	\end{equation}
	for all $\xi \in C^1(\overline{\Omega})$. 
	\\[0.5em]
	Now fix $\varphi\in C^1(\overline{\Omega})$, positive $\psi \in C^1(\overline{\Omega})$ and $a > 0$. 
	We then observe that the estimate
	\[
		\ln\left( \int_\Omega e^{a\varphi} \right) = \ln\left( \int_\Omega e^{a\varphi} \frac{m}{\psi} \frac{\psi}{m} \right) \geq \frac{a}{m}\int_\Omega \varphi \psi - \frac{1}{m}\int_\Omega \psi\ln\left(\frac{\psi}{m}\right) 
	\]
	holds with $m \defs \int_\Omega \psi$ because of Jensen's inequality. If we now apply (\ref{eq:moser-trudinger-cons}) with $\xi \defs a \varphi$ to this and multiply by $\frac{m}{a}$, we get that
	\[
		\int_\Omega \varphi \psi \leq \frac{1}{a}\int_\Omega \psi\ln\left(\frac{\psi}{m}\right)  + \frac{am}{8\pi} \int_\Omega |\grad \varphi|^2 + \frac{m}{|\Omega|}  \int_\Omega \varphi + \frac{m}{a}\ln(K_1)
	\]
	or further that
	\[
		\int_\Omega \varphi (\psi - \overline{\psi}) \leq \frac{1}{a} \left[ \; \int_\Omega \psi \ln\left(\frac{\;\psi\;}{\overline{\psi}}\right)  + \ln\left(\frac{K_1}{|\Omega|}\right)\int_\Omega \psi \; \right] + \frac{a}{8\pi}\left\{\int_\Omega \psi\right\} \int_\Omega |\grad \varphi|^2 \numberthis \label{eq:proto_functional_eq1}
	\]
	after some rearranging. This gives us our first result. 
	\\[0.5em] 
	Now only fix a positive $\psi \in C^{1}(\overline{\Omega})$.
	We can then choose $\varphi \defs \ln\left(\frac{\;\psi\;}{\overline{\psi}}\right)$ and $a \defs 2$ in (\ref{eq:proto_functional_eq1}) to get that
	\[
		\int_\Omega \psi \ln\left(\frac{\;\psi\;}{\overline{\psi}}\right) \leq \frac{1}{2} \int_\Omega \psi \ln\left(\frac{\;\psi\;}{\overline{\psi}}\right) + \frac{1}{4\pi} \left\{\int_\Omega \psi\right\} \int_\Omega \frac{|\grad \psi|^2 }{\psi^2} + \frac{1}{2}\ln\left(\frac{K_1}{|\Omega|}\right)\int_\Omega \psi + \overline{\psi}\int_\Omega \ln\left(\frac{\;\psi\;}{\overline{\psi}}\right).
	\] 
	Because by Jensen's inequality $\int_\Omega \ln\Big(\tfrac{\,\psi\,}{\overline{\psi}}\Big) \leq 0$, this directly implies our second result.
\end{proof}\noindent
Employing the second functional inequality (\ref{eq:fucntional_ineq_2}), we can now use the fairly weak regularity information in (\ref{eq:weak_grad_n_smallness}) to derive the following preliminary integrability property for the family $(n_\varepsilon)_{\varepsilon \in (0,1)}$, which will not only prove useful to derive bounds for the family $(u_\varepsilon)_{\varepsilon \in (0,1)}$, but also to later simplify a compactness argument, which was already used in the Stokes case.
\begin{lemma}
	\label{lemma:nlogn_bound}
	For each $T > 0$, there exists $C(T) > 0$ such that
	\begin{equation}
		\int_0^T \int_\Omega (n_\varepsilon + 1)\ln\left(\frac{n_\varepsilon + 1}{\overline{n_0} + 1}\right) \leq C(T)
		\label{eq:nlogn_bound}
	\end{equation}
	for all $\varepsilon \in (0,1)$.
\end{lemma}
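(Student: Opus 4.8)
The plan is to apply the second functional inequality (\ref{eq:fucntional_ineq_2}) pointwise in time with the choice $\psi \defs n_\varepsilon(\cdot, t) + 1$, and then to integrate the resulting estimate over $(0, T)$. This substitution is admissible because \Cref{lemma:approx_existence} guarantees that $n_\varepsilon(\cdot, t) + 1$ is a positive $C^1(\overline{\Omega})$ function for each fixed $t > 0$.

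The key observation making this choice work so cleanly is that the mean value of $\psi$ is precisely the denominator appearing in (\ref{eq:nlogn_bound}): by the mass conservation property (\ref{eq:mass_perservation}) we have $\overline{\psi} = \frac{1}{|\Omega|}\int_\Omega (n_\varepsilon + 1) = \overline{n_\varepsilon} + 1 = \overline{n_0} + 1$ for all $t > 0$. Moreover the total mass $\int_\Omega \psi = \int_\Omega n_0 + |\Omega| \eqqcolon M$ is a fixed constant depending only on the initial data and $\Omega$, and in particular is independent of both $\varepsilon$ and $t$. Since additionally $\grad(n_\varepsilon + 1) = \grad n_\varepsilon$, plugging $\psi = n_\varepsilon + 1$ into (\ref{eq:fucntional_ineq_2}) yields
\[
	\int_\Omega (n_\varepsilon + 1)\ln\left(\frac{n_\varepsilon + 1}{\overline{n_0} + 1}\right) \leq \frac{M}{2\pi}\int_\Omega \frac{|\grad n_\varepsilon|^2}{(n_\varepsilon + 1)^2} + CM \;\;\;\; \forall t > 0,
\]
with $C$ as in \Cref{lemma:functional_ineq}.

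It then only remains to integrate this inequality over the time interval $(0, T)$. The constant term $CM$ simply contributes $CMT$, while the spatial gradient term is controlled uniformly in $\varepsilon$ using the weak gradient estimate (\ref{eq:weak_grad_n_smallness}) from \Cref{lemma:basic_props}: one bounds $\int_0^T \int_\Omega \frac{|\grad n_\varepsilon|^2}{(n_\varepsilon+1)^2}$ from above by $\int_0^\infty \int_\Omega \frac{|\grad n_\varepsilon|^2}{(n_\varepsilon+1)^2} \leq 2\int_\Omega n_0 + S_0^2(\|c_0\|_\L{\infty})\int_\Omega c_0^2$, a constant that depends neither on $\varepsilon$ nor on $T$. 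Setting $C(T)$ to be $\frac{M}{2\pi}$ times this constant plus $CMT$ then gives the claimed bound.

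Since every ingredient is already in hand, there is no serious obstacle to overcome here; the whole argument is essentially a direct, $\varepsilon$-uniform combination of the Trudinger--Moser type inequality (\ref{eq:fucntional_ineq_2}) with the a priori estimate (\ref{eq:weak_grad_n_smallness}). The only point requiring a little care is the alignment of the mean value $\overline{\psi}$ with $\overline{n_0} + 1$, which is exactly what mass conservation provides and is the reason the statement is phrased with the fixed quantity $\overline{n_0} + 1$ rather than the running mean $\overline{n_\varepsilon} + 1$.
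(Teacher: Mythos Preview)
Your proof is correct and follows essentially the same route as the paper: apply (\ref{eq:fucntional_ineq_2}) with $\psi = n_\varepsilon(\cdot,t)+1$, use mass conservation to identify $\overline{\psi}$ with $\overline{n_0}+1$ and $\int_\Omega\psi$ with a fixed constant, then integrate in time and invoke (\ref{eq:weak_grad_n_smallness}). The paper's argument is the same, just with slightly less explicit commentary on the admissibility of the substitution and the role of mass conservation.
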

\begin{proof}
	Applying the functional inequality (\ref{eq:fucntional_ineq_2}) from \Cref{lemma:functional_ineq} to $\psi \defs n_\varepsilon + 1$ directly yields that
	\begin{align*}
		\int_\Omega (n_\varepsilon + 1) \ln\left( \frac{\;n_\varepsilon + 1\;}{\overline{n_0} + 1}\right) \leq \frac{1}{2\pi}\left\{  \int_\Omega (n_\varepsilon + 1) \right\} \int_\Omega \frac{|\grad n_\varepsilon|^2}{(n_\varepsilon + 1)^2} + K_1 \int_\Omega (n_\varepsilon + 1)
		 \;\;\;\; \forall t \in [0,T] \numberthis \label{eq:initial_nlnn}
	\end{align*} for 
	all $\varepsilon \in (0,1)$ and some constant $K_1 > 0$ given by the lemma. We further know from \Cref{lemma:basic_props} that there exists a constant $K_2 > 0$ such that
	\[
		\int_0^\infty \int_\Omega \frac{|\grad n_\varepsilon|^2}{(n_\varepsilon + 1)^2} \leq K_2
	\]
	for all $\varepsilon \in (0,1)$. After time integration of (\ref{eq:initial_nlnn}), this then gives us that
	\[
		\int_0^T 	\int_\Omega (n_\varepsilon + 1) \ln\left( \frac{\;n_\varepsilon + 1\;}{\overline{n_0} + 1}\right) \leq \left(\frac{K_2}{2\pi} + K_1T\right) \int_\Omega (n_0 + 1) 
	\]
	for all $\varepsilon \in (0,1)$, which completes the proof.
\end{proof}\noindent
This in combination with another application of our functional inequalities above then serves as the basis to prove the central result of this section, namely the $L^2$ type bounds for the functions $(u_\varepsilon)_{\varepsilon \in (0,1)}$ and their gradients, which replace the results gained via semigroup arguments in the Stokes case:
\begin{lemma}
	\label{lemma:basic_u_bounds}
	For each $T > 0$, there exists a $C(T) > 0$ such that
	\begin{equation}
		\|u_\varepsilon(\cdot, t)\|_\L{2} \leq C(T) \;\;\;\;\;\; \forall t \in [0,T]
		\label{eq:u_bound}
	\end{equation}
	and 
	\begin{equation}
		\int_0^T \int_\Omega |\grad u_\varepsilon|^2 \leq C(T) \label{eq:grad_u_bound}
	\end{equation}
	for all $\varepsilon \in (0,1)$.
\end{lemma}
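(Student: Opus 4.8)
The plan is to derive a differential inequality for $\int_\Omega |u_\varepsilon|^2$ by testing the third equation in (\ref{approx_system}) with $u_\varepsilon$ itself, and then to control the resulting buoyancy term by combining the first functional inequality (\ref{eq:fucntional_ineq_1}) with the time integrability of the $n\log n$ quantity from \Cref{lemma:nlogn_bound}. First I would test the $u_\varepsilon$-equation with $u_\varepsilon$ and integrate over $\Omega$. Because $\div u_\varepsilon = 0$ and $u_\varepsilon = 0$ on $\partial\Omega$, the convection term $\int_\Omega ((u_\varepsilon\cdot\grad)u_\varepsilon)\cdot u_\varepsilon$ and the pressure term $\int_\Omega \grad P_\varepsilon\cdot u_\varepsilon$ both vanish, while $\int_\Omega \laplace u_\varepsilon\cdot u_\varepsilon = -\int_\Omega |\grad u_\varepsilon|^2$. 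This produces the energy identity
\[
	\tfrac{1}{2}\tfrac{\d}{\d t}\int_\Omega |u_\varepsilon|^2 + \int_\Omega |\grad u_\varepsilon|^2 = \int_\Omega n_\varepsilon \grad\phi\cdot u_\varepsilon,
\]
so the entire difficulty is concentrated in the buoyancy term on the right.

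The next step is to rewrite that term in a form amenable to (\ref{eq:fucntional_ineq_1}). Since $\div u_\varepsilon = 0$ and $u_\varepsilon$ vanishes on $\partial\Omega$, an integration by parts gives $\int_\Omega \grad\phi\cdot u_\varepsilon = 0$, so setting $\psi \defs n_\varepsilon + 1$ and the scalar $\varphi \defs \grad\phi\cdot u_\varepsilon$ we obtain
\[
	\int_\Omega n_\varepsilon \grad\phi\cdot u_\varepsilon = \int_\Omega \psi\,\varphi = \int_\Omega \varphi(\psi - \overline{\psi}),
\]
using $\int_\Omega \varphi = 0$ once more. This is precisely the left-hand side of (\ref{eq:fucntional_ineq_1}). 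Applying that inequality (extended from $C^1$ to $W^{1,2}$ test functions by density, which covers $\varphi = \grad\phi\cdot u_\varepsilon$ in view of (\ref{phi_regularity})), and noting that mass conservation (\ref{eq:mass_perservation}) makes $\int_\Omega \psi = \int_\Omega n_0 + |\Omega| =: M$ a fixed constant while $\overline{\psi} = \overline{n_0} + 1$, I arrive at
\[
	\int_\Omega n_\varepsilon \grad\phi\cdot u_\varepsilon \leq \frac{1}{a}\left[\int_\Omega (n_\varepsilon+1)\ln\!\left(\frac{n_\varepsilon+1}{\overline{n_0}+1}\right) + CM\right] + \frac{aM}{8\pi}\int_\Omega |\grad(\grad\phi\cdot u_\varepsilon)|^2
\]
for every $a > 0$, where I would bound the last factor by the product rule and (\ref{phi_regularity}) as $\int_\Omega |\grad(\grad\phi\cdot u_\varepsilon)|^2 \leq C_\phi\big(\int_\Omega |u_\varepsilon|^2 + \int_\Omega |\grad u_\varepsilon|^2\big)$.

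The crucial manoeuvre, and the main obstacle, is the choice of $a$. Because $M$ and $C_\phi$ are fixed constants independent of both $\varepsilon$ and $t$, I can fix $a$ small enough that $\tfrac{aM C_\phi}{8\pi} \leq \tfrac{1}{2}$; this is exactly what allows the $\int_\Omega |\grad u_\varepsilon|^2$ contribution arising on the right to be absorbed into the dissipation term on the left of the energy identity. The key point to verify carefully is therefore that $M$ depends only on the fixed data — which it does through mass conservation — so that this single choice of $a$ is admissible uniformly in $\varepsilon$ and $t$. With such an $a$ fixed and writing $y(t) \defs \int_\Omega |u_\varepsilon(\cdot,t)|^2$, the identity reduces to a differential inequality of the form
\[
	y'(t) + \int_\Omega |\grad u_\varepsilon|^2 \leq c_1\int_\Omega (n_\varepsilon+1)\ln\!\left(\frac{n_\varepsilon+1}{\overline{n_0}+1}\right) + c_2\,y(t) + c_3
\]
with constants $c_1, c_2, c_3 > 0$ independent of $\varepsilon$.

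Finally I would close the estimate in two stages. Dropping the nonnegative gradient term and invoking \Cref{lemma:nlogn_bound} to bound $\int_0^T\int_\Omega (n_\varepsilon+1)\ln(\cdots)$ uniformly in $\varepsilon$, a Grönwall argument on $[0,T]$ (using $y(0) = \|u_0\|_{\L{2}}^2$) yields the uniform bound (\ref{eq:u_bound}). Reinstating the gradient term and integrating the differential inequality over $[0,T]$, every term on the right is then controlled by (\ref{eq:u_bound}) together with \Cref{lemma:nlogn_bound}, which delivers (\ref{eq:grad_u_bound}) and completes the proof.
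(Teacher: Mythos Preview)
Your approach is essentially the same as the paper's: test the third equation with $u_\varepsilon$, rewrite the buoyancy term as $\int_\Omega \varphi(\psi-\overline\psi)$ with $\varphi=\grad\phi\cdot u_\varepsilon$ and $\psi=n_\varepsilon+1$, apply (\ref{eq:fucntional_ineq_1}), and choose $a$ to absorb the emerging $\int_\Omega|\grad u_\varepsilon|^2$ into the dissipation, then close via \Cref{lemma:nlogn_bound}. The only minor difference is that the paper goes one step further and uses the Poincar\'e inequality (available since $u_\varepsilon|_{\partial\Omega}=0$) to estimate $\int_\Omega|u_\varepsilon|^2\leq C_p^2\int_\Omega|\grad u_\varepsilon|^2$ inside the bound for $\int_\Omega|\grad(\grad\phi\cdot u_\varepsilon)|^2$; this eliminates the $c_2\,y(t)$ term on the right and yields $y'(t)+\int_\Omega|\grad u_\varepsilon|^2\leq g_\varepsilon(t)$ directly, so a plain time integration replaces your Gr\"onwall step. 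Both routes are correct and lead to the same conclusion.
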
 
\begin{proof}
As our first step, we test the third equation in (\ref{approx_system}) with $u_\varepsilon$ itself to gain that
\[
	\frac{1}{2}\frac{\d}{\d t}\int_\Omega |u_\varepsilon|^2 = -\int_\Omega |\grad u_\varepsilon|^2 + \int_\Omega n_\varepsilon \grad \phi \cdot u_\varepsilon = -\int_\Omega |\grad u_\varepsilon|^2 + \int_\Omega \grad \phi \cdot u_\varepsilon (n_\varepsilon - \overline{n_\varepsilon}) \;\;\;\; \forall t\in[0,T] \text{ and }\varepsilon \in (0,1).
\]
We now apply (\ref{eq:fucntional_ineq_1}) from \Cref{lemma:functional_ineq} (with $\varphi \defs \grad \phi \cdot u_\varepsilon$ and $\psi \defs n_\varepsilon + 1$) to the rightmost term in the previous inequality to gain a constant $K_1 > 0$ such that
\begin{align*}
	\frac{1}{2}\frac{\d}{\d t}\int_\Omega |u_\varepsilon|^2
	&\leq -\int_\Omega |\grad u_\varepsilon|^2 \\
	&+ \frac{1}{a} \left[\int_\Omega (n_\varepsilon + 1) \ln\left( \frac{\;n_\varepsilon + 1\;}{\overline{n_0} + 1}\right)  + K_1\int_\Omega (n_0 + 1) \right] + \frac{a}{8\pi}\left\{\int_\Omega (n_0 + 1)\right\} \int_\Omega |\grad (\grad \phi \cdot u_\varepsilon)|^2  \label{eq:u_with_u_test}\numberthis
\end{align*}
for any $a > 0$ and each $t \in [0,T], \varepsilon \in (0,1)$. Further note that
\begin{align*}
	\int_{\Omega}|\grad (\grad \phi \cdot u_\varepsilon)|^2  &\leq  2\int_{\Omega}| \grad \phi |^2 |\grad u_\varepsilon|^2 + 2\int_{\Omega} |H_\phi|^2 |u_\varepsilon|^2 \\
	&\leq 2\|\grad \phi\|_\L{\infty}^2\int_{\Omega} |\grad u_\varepsilon|^2 + 2\|H_\phi\|_\L{\infty}^2 \int_{\Omega} |u_\varepsilon|^2 \\
	&\leq  K_2 \int_{\Omega}|\grad u_\varepsilon|^2  \;\;\;\; \forall t\in[0,T] \text{ and } \varepsilon \in (0,1)
\end{align*}
with $K_2 \defs 2\|\grad \phi\|_\L{\infty}^2 + 2\|H_\phi\|_\L{\infty}^2 C_p^2$. Here $H_\phi$ denotes the Hessian of $\phi$ and $C_p$ is the Poincaré constant for $\Omega$. If we now apply this to (\ref{eq:u_with_u_test}) and set 
\[
	a \defs K_3 \defs \frac{8\pi}{2K_2}\left\{\int_\Omega (n_0 + 1) \right\}^{-1}  \;\;\;\; \forall t\in[0,T],
\]
we gain that
\begin{equation*}
	\frac{1}{2}\frac{\d}{\d t}\int_\Omega |u_\varepsilon|^2 + \frac{1}{2}\int_\Omega |\grad u_\varepsilon|^2 \leq \frac{g_\varepsilon(t)}{2} 
\end{equation*}
or rather
\begin{equation}
	\frac{\d}{\d t}\int_\Omega |u_\varepsilon|^2 + \int_\Omega |\grad u_\varepsilon|^2 \leq g_\varepsilon(t) 
	\label{eq:u_with_u_test_mod}
\end{equation}
with
\[
g_\varepsilon(t) \defs  \frac{2}{K_3} \left[\int_\Omega (n_\varepsilon + 1) \ln\left( \frac{\;n_\varepsilon + 1\;}{\overline{n_0} + 1}\right)  + K_1\int_\Omega (n_0 + 1) \right] \geq 0 
\]
for all $t\in[0,T]$ and $\varepsilon \in (0,1)$.
Because of \Cref{lemma:nlogn_bound}, there further exists a constant $K_4(T) > 0$ such that
\[
	\int_0^T g_\varepsilon(t) \d t \leq \frac{2}{K_3}\left[  \; K_4(T) + K_1 T \int_\Omega (n_0 + 1) \; \right] =: K_5(T)
\] 
for all $\varepsilon \in (0,1)$. If we now integrate (\ref{eq:u_with_u_test_mod}), this  property of $g_\varepsilon$ then directly gives us that
\[
	\int_\Omega |u_\varepsilon(\cdot, t)|^2 + \int_0^t \int_\Omega |\grad u_\varepsilon|^2  \leq \int_\Omega |u_0|^2 + \int_0^t g_\varepsilon(t) \leq \int_\Omega |u_0|^2 + K_5(T)
\]
for all $t\in[0,T]$, which immediately implies (\ref{eq:u_bound}) and (\ref{eq:grad_u_bound}).
\end{proof}
\subsection{Construction of limit functions}
Having now established weaker (though still suitable) uniform bounds for $u_\varepsilon$ than those seen in the Stokes case, we will make the final preparations for the construction of limit function for our family of approximate solutions as $\varepsilon \searrow 0$. We do this by proving some additional, albeit fairly weak, boundedness results for the time derivatives of all three families $(n_\varepsilon)_{\varepsilon \in (0,1)}$, $(c_\varepsilon)_{\varepsilon \in (0,1)}$ and $(u_\varepsilon)_{\varepsilon \in (0,1)}$ as this will provide the last prerequisite for some key applications of the Aubin-Lions lemma (cf.\ \cite{TemamNavierStokes}):
\begin{lemma}
	For all $T > 0$, there exists a constant $C(T) > 0$ such that
	\begin{gather}
		\int_0^T \|\partial_t \ln(n_\varepsilon(\cdot, t) + 1)\|_{(W^{2,2}_0(\Omega))^\star} \d t \leq C(T),
		\label{eq:dual_space_n_bound} \\
		\int_0^T \|{c_\varepsilon}_t(\cdot, t)  \|_{(W^{2,2}_0(\Omega))^\star}\d t \leq C(T)	
		\label{eq:dual_space_c_bound}
	\end{gather}
	and
	\begin{equation}
		\int_0^T \|{u_\varepsilon}_t(\cdot, t) \|_{(W^{2,2}_{0,\sigma}(\Omega))^\star} \d t \leq C(T)
		\label{eq:dual_space_u_bound}
	\end{equation}
	for all $\varepsilon \in (0,1)$.
	\label{lemma:dual_space_bounds}
\end{lemma}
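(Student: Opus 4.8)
The plan is to treat the three families by a single recipe: for each evolution equation in (\ref{approx_system}) I fix a test function $\varphi$ from the relevant space — $W^{2,2}_0(\Omega)$ for the first two components and $W^{2,2}_{0,\sigma}(\Omega)$ for the third — with norm at most one, pair the corresponding equation with $\varphi$, integrate by parts to shift all spatial derivatives onto $\varphi$, bound every resulting term by a time-integrable coefficient times $\|\varphi\|$, and finally take the supremum over admissible $\varphi$ and integrate in $t$. Throughout I will use the two-dimensional Sobolev embeddings $W^{2,2}(\Omega)\hookrightarrow \L{\infty}$ and $W^{2,2}(\Omega)\hookrightarrow W^{1,4}(\Omega)$, the Ladyzhenskaya inequality $\|w\|_\L{4}^2\leq C\|w\|_\L{2}\|w\|_{W^{1,2}(\Omega)}$, the a priori bounds from \Cref{lemma:basic_props}, \Cref{lemma:nlogn_bound} and \Cref{lemma:basic_u_bounds}, and Hölder's inequality in the time variable.

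For the $c$- and $u$-components this is essentially routine. Writing the second equation as ${c_\varepsilon}_t = \laplace c_\varepsilon - n_\varepsilon f(c_\varepsilon) - u_\varepsilon\cdot\grad c_\varepsilon$ and pairing with $\varphi$, the diffusion term becomes $-\int_\Omega\grad c_\varepsilon\cdot\grad\varphi$ (bounded via (\ref{eq:grad_c_bound})), the reaction term is controlled by $\sup_{[0,\|c_0\|_\L{\infty}]}|f|\cdot\|\varphi\|_\L{\infty}\int_\Omega n_\varepsilon$ through mass conservation (\ref{eq:mass_perservation}) and the pointwise bound $c_\varepsilon\leq\|c_0\|_\L{\infty}$ from (\ref{eq:c_monoticity}), and the convective term, integrated by parts using $\div u_\varepsilon=0$, equals $\int_\Omega c_\varepsilon(u_\varepsilon\cdot\grad\varphi)$, which is handled by $\|c_0\|_\L{\infty}$ and (\ref{eq:u_bound}). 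For the third equation I pair with the solenoidal $\varphi$ so the pressure gradient drops out; the Laplacian yields $-\int_\Omega\grad u_\varepsilon\cdot\grad\varphi$ (bounded via (\ref{eq:grad_u_bound})), the buoyancy term is again dispatched by mass conservation and $\phi\in W^{2,\infty}(\Omega)$, and the nonlinear convection, integrated by parts, becomes $\int_\Omega(u_\varepsilon\otimes u_\varepsilon)\cdot\grad\varphi$, estimated by $\|u_\varepsilon\|_\L{4}^2\|\grad\varphi\|_\L{2}\leq C\|u_\varepsilon\|_\L{2}\|u_\varepsilon\|_{W^{1,2}(\Omega)}\|\varphi\|$, whose time integral is finite by combining (\ref{eq:u_bound}) and (\ref{eq:grad_u_bound}) via Hölder in $t$.

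The first component requires the most care. Using the identities $\tfrac{\laplace n_\varepsilon}{n_\varepsilon+1}=\laplace\ln(n_\varepsilon+1)+|\grad\ln(n_\varepsilon+1)|^2$ and $\tfrac{u_\varepsilon\cdot\grad n_\varepsilon}{n_\varepsilon+1}=u_\varepsilon\cdot\grad\ln(n_\varepsilon+1)$, pairing $\partial_t\ln(n_\varepsilon+1)=\tfrac{{n_\varepsilon}_t}{n_\varepsilon+1}$ with $\varphi$ and integrating the divergence terms by parts (the no-flux condition and $\varphi\in W^{2,2}_0(\Omega)$ annihilating every boundary contribution) reproduces exactly the four right-hand terms of (\ref{wsol:ln_n_inequality}). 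Of these, the diffusive part $-\int_\Omega\grad\ln(n_\varepsilon+1)\cdot\grad\varphi$ and the quadratic part $\int_\Omega|\grad\ln(n_\varepsilon+1)|^2\varphi$ are both controlled directly by the key estimate (\ref{eq:weak_grad_n_smallness}) (the latter additionally using $\|\varphi\|_\L{\infty}$), while the two chemotaxis terms are bounded using $|S_\varepsilon|\leq S_0(\|c_0\|_\L{\infty})$ together with (\ref{eq:grad_c_bound}) and (\ref{eq:weak_grad_n_smallness}).

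The genuinely delicate term is the convective one, $\int_\Omega\ln(n_\varepsilon+1)(u_\varepsilon\cdot\grad\varphi)$: here I estimate $\leq\|\ln(n_\varepsilon+1)\|_\L{2}\|u_\varepsilon\|_\L{4}\|\grad\varphi\|_\L{4}$ and invoke Ladyzhenskaya, so that finiteness of $\int_0^T\|\ln(n_\varepsilon+1)\|_\L{2}\|u_\varepsilon\|_\L{2}^{1/2}\|u_\varepsilon\|_{W^{1,2}(\Omega)}^{1/2}\,\d t$ must be verified. I first note that Poincaré's inequality, the nonnegativity of $\ln(n_\varepsilon+1)$, and Jensen's inequality bounding the mean of $\ln(n_\varepsilon+1)$ by $\ln(\overline{n_0}+1)$ convert (\ref{eq:weak_grad_n_smallness}) and mass conservation into a uniform bound for $\int_0^T\|\ln(n_\varepsilon+1)\|_\L{2}^2\,\d t$; combined with the uniform-in-time bound (\ref{eq:u_bound}) and the space-time bound (\ref{eq:grad_u_bound}), a single Cauchy--Schwarz step in $t$ (pairing the $L^2_t$-factor $\|\ln(n_\varepsilon+1)\|_\L{2}$ against the $L^2_t$-factor $\|u_\varepsilon\|_{W^{1,2}(\Omega)}^{1/2}$, the remaining factor being uniformly bounded) yields the required integrability. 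I expect precisely this interplay between the weak logarithmic gradient information for $n_\varepsilon$ and the merely $L^2$-type bounds for $u_\varepsilon$ — rather than any single term in isolation — to be the main obstacle; once it is settled, taking suprema over $\varphi$ and integrating in time gives (\ref{eq:dual_space_n_bound}), (\ref{eq:dual_space_c_bound}) and (\ref{eq:dual_space_u_bound}).
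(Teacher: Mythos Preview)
Your proposal is correct and follows the same overall strategy as the paper: test each equation against a function from the appropriate space, estimate the resulting terms, and integrate in time. The main difference lies in how you handle the convective contributions. For the $n_\varepsilon$-equation you integrate the convective term by parts to obtain $\int_\Omega \ln(n_\varepsilon+1)(u_\varepsilon\cdot\grad\varphi)$ and then invoke Ladyzhenskaya together with a Poincar\'e--Jensen argument to control $\|\ln(n_\varepsilon+1)\|_\L{2}$ in $L^2_t$; the paper instead leaves this term as $\int_\Omega (u_\varepsilon\cdot\grad\ln(n_\varepsilon+1))\,\psi$, pulls out $\|\psi\|_\L{\infty}$ via the embedding $W^{2,2}(\Omega)\hookrightarrow \L{\infty}$, and applies Cauchy--Schwarz in space followed by Young's inequality, arriving directly at the pointwise-in-time bound $K\big\{1+\int_\Omega|\grad\ln(n_\varepsilon+1)|^2+\int_\Omega|\grad c_\varepsilon|^2+\int_\Omega|u_\varepsilon|^2\big\}\big(\|\grad\psi\|_\L{2}+\|\psi\|_\L{\infty}\big)$. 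Similarly, for the Navier--Stokes convection the paper estimates $\int_\Omega|u_\varepsilon||\grad u_\varepsilon||\psi|\leq\|u_\varepsilon\|_\L{2}\|\grad u_\varepsilon\|_\L{2}\|\psi\|_\L{\infty}$ without integrating by parts, rather than passing through $\|u_\varepsilon\|_\L{4}^2$. Both routes are valid; the paper's is a bit shorter because it exploits the $L^\infty$-control on the test function more aggressively and thereby sidesteps Ladyzhenskaya and the auxiliary $L^2_tL^2_x$-bound on $\ln(n_\varepsilon+1)$ entirely, whereas your version has the minor advantage of producing estimates closer in form to the weak formulation (\ref{wsol:ln_n_inequality}).
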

\begin{proof}
	We will only give this proof in full detail for (\ref{eq:dual_space_u_bound}) and then just provide a sketch for (\ref{eq:dual_space_n_bound}) and (\ref{eq:dual_space_c_bound}) as all three inequalities can be proven in quite similar a fashion and more detailed proofs for (\ref{eq:dual_space_n_bound}) and (\ref{eq:dual_space_c_bound}) can be found in \cite{WinklerStokesCase}.
	\\[0.5em]
	As such, we first fix a $\psi \in W^{2,2}_{0,\sigma}(\Omega)$ and then test the third equation in (\ref{approx_system}) with $\psi$ to gain that
	\[
	\int_\Omega {u_\varepsilon}_t \cdot \psi = -\int_\Omega (u_\varepsilon \cdot \grad) {u_\varepsilon} \cdot \psi -\int_\Omega \grad {u_\varepsilon} \grad \psi -  \int_\Omega P_\varepsilon (\div \psi) +  \int_\Omega n_\varepsilon \grad \phi \cdot \psi \;\;\;\;\;\; \forall t \in [0,T] \text{ and } \varepsilon \in (0,1)
	\]
	after some partial integration steps.
	This then leads to 
	\begin{align*}
	\left| \; \int_\Omega {u_\varepsilon}_t \cdot \psi \; \right| &\leq 
	\int_\Omega |u_\varepsilon| |\grad u_\varepsilon| |\psi| + \int_\Omega |\grad u_\varepsilon| |\grad \psi| + \int_\Omega n_\varepsilon |\grad \phi| |\psi| \\
	&\leq \|u_\varepsilon\|_\L{2} \|\grad u_\varepsilon\|_\L{2} \|\psi\|_\L{\infty} + \|\grad u_\varepsilon\|_\L{2} \|\grad \psi\|_\L{2} + \|\grad \phi\|_\L{\infty} \|\psi\|_\L{\infty} \int_\Omega n_\varepsilon \\
	&\leq \left\{\left(\|u_\varepsilon\|_\L{2} + 1\right)\|\grad u_\varepsilon\|_\L{2} + \|\grad \phi\|_\L{\infty} \int_\Omega n_0 \right\}  \left(\|\grad \psi\|_\L{2} + \|\psi\|_\L{\infty} \right)
	\end{align*}
	for all $t \in [0,T]$ and $\varepsilon \in (0,1)$ by using the Cauchy--Schwarz inequality and the fact that $\div \psi = 0$. 
	By employing Young's inequality, the fact that $W^{2,2}(\Omega)$ embeds continuously into $L^\infty(\Omega)$ and the inequality (\ref{eq:u_bound}) from \Cref{lemma:basic_u_bounds}, we see that there exist constants $K_1(T), K_2(T) > 0$ such that
	\[
	\int_0^T \|{u_\varepsilon}_t \|_{(W^{2,2}_{0,\sigma}(\Omega))^\star} \leq \int_0^T \left(K_1(T) \int_\Omega |\grad u_\varepsilon|^2 + K_2(T)\right)
	\]
	for all $\varepsilon \in (0,1)$. Another application of \Cref{lemma:basic_u_bounds} and specifically the inequality (\ref{eq:grad_u_bound}) therein then directly gives us the desired bound for the family $(u_\varepsilon)_{\varepsilon \in (0,1)}$. 
	\\[0.5em]
	By testing the first equation in (\ref{approx_system}) with $\frac{\psi}{n_\varepsilon + 1}$ and the second equation in (\ref{approx_system}) with $\psi$ for any $\psi \in W^{2,2}(\Omega)$, we gain that
	\[
		\left|\;\int_\Omega \partial_t \ln(n_\varepsilon + 1) \cdot \psi \; \right| \leq K_3(T) \left\{ 1 + \int_\Omega \frac{|\grad n_\varepsilon|^2}{(n_\varepsilon + 1)^2} + \int_\Omega |\grad c_\varepsilon|^2 + \int_\Omega |u_\varepsilon|^2 \right\} \left(\|\grad \psi\|_\L{2} + \|\psi\|_\L{\infty} \right)
	\]
	and
	\[
		\left|\;\int_\Omega {c_\varepsilon}_t \cdot \psi \; \right| \leq K_3(T) \left\{  1 + \int_\Omega |\grad c_\varepsilon|^2 + \int_\Omega |u_\varepsilon|^2 \right\} \left(\|\grad \psi\|_\L{2} + \|\psi\|_\L{\infty} \right)
	\]
	for all $t \in [0,T]$, $\varepsilon \in (0,1)$ and some $K_3(T) > 0$ by similar techniques as seen above or in the proof of  Lemma 3.4 in Reference \cite{WinklerStokesCase}. Combining these two inequalities with \Cref{lemma:basic_props} and \Cref{lemma:basic_u_bounds} then yields the remaining two bounds for the families $(n_\varepsilon)_{\varepsilon \in (0,1)}$, $(c_\varepsilon)_{\varepsilon \in (0,1)}$ and therefore this completes the proof.
\end{proof}
\noindent
This then allows us to use essentially three applications of the Aubin-Lions lemma (cf.\ \cite{TemamNavierStokes}) to prove the following sequence selection and convergence result:
\begin{lemma}
	\label{lemma:subsequence_extraction}
	There exists a sequence $(\varepsilon_j)_{j\in\N} \in (0,1)$ with $\varepsilon_j \searrow 0$ as $j\rightarrow\infty$ such that
	\begin{equation}
	\left\{
		\begin{aligned}
		&n_\varepsilon \rightarrow n && \text{a.e.\ in } \Omega \times (0,\infty), \\
		&\ln(n_\varepsilon + 1) \rightharpoonup \ln(n + 1) \;\;\;\;\;\;\;\;\;\;\;\;\;\; && \text{in } L^2_\loc([0,\infty);W^{1,2}(\Omega)), \\
		&c_\varepsilon \rightarrow c  && \text{in } L^2_\loc(\overline{\Omega}\times[0,\infty))  \text{ and a.e.\ in } \Omega \times (0,\infty), \\
		&c_\varepsilon(\cdot, t) \rightarrow c(\cdot, t) && \text{in } L^2(\Omega) \text{ for a.e.\ } t > 0, \\
		&c_\varepsilon  \rightharpoonup c && \text{in } L^2_\loc([0,\infty);W^{1,2}(\Omega)) , \\
		& u_\varepsilon \rightarrow u  && \text{in } (L^2_\loc(\overline{\Omega}\times[0,\infty)))^2\text{ and a.e.\ in } \Omega \times (0,\infty) \;\;\;\; \text{ and } \\
		& u_\varepsilon \rightharpoonup u && \text{in } L^2_\loc([0,\infty);W^{1,2}_{0,\sigma}(\Omega))
		\end{aligned}
	\right. 
	\label{eq:basic_convergence_props}
	\end{equation}
	as $\varepsilon = \varepsilon_j \searrow 0$ and a triple of limit functions $(n,c,u)$ defined on $\Omega\times(0,\infty)$ and satisfying $n,c \geq 0$ and $\div u = 0$ almost everywhere.
\end{lemma}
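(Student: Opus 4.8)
The plan is to apply the Aubin--Lions lemma once to each of the three families, feeding it the spatial regularity bounds from \Cref{lemma:basic_props} and \Cref{lemma:basic_u_bounds} together with the dual-space time-derivative bounds from \Cref{lemma:dual_space_bounds}, and then to read off the weak convergences directly from the same spatial bounds. The divergence and sign constraints on the limit functions are finally obtained as easy byproducts.

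First I would treat $\ln(n_\varepsilon + 1)$. The bound (\ref{eq:weak_grad_n_smallness}) shows that $\grad \ln(n_\varepsilon + 1) = \frac{\grad n_\varepsilon}{n_\varepsilon + 1}$ is bounded in $(L^2(\Omega\times(0,T)))^2$, while Jensen's inequality together with mass conservation (\ref{eq:mass_perservation}) bounds the spatial mean via $\overline{\ln(n_\varepsilon+1)} \leq \ln(\overline{n_0}+1)$, so the Poincaré inequality yields a uniform bound for $\ln(n_\varepsilon+1)$ in $L^2((0,T); W^{1,2}(\Omega))$. Combined with (\ref{eq:dual_space_n_bound}) and the chain $W^{1,2}(\Omega) \hookrightarrow\hookrightarrow L^2(\Omega) \hookrightarrow (W^{2,2}_0(\Omega))^\star$ of a compact followed by a continuous embedding, the Aubin--Lions lemma provides a subsequence along which $\ln(n_\varepsilon+1) \to \zeta$ strongly in $L^2((0,T);L^2(\Omega))$, and hence a.e., so that $n_\varepsilon = e^{\ln(n_\varepsilon+1)} - 1 \to n \defs e^\zeta - 1$ a.e. The $W^{1,2}$ bound moreover furnishes weak convergence in $L^2_\loc([0,\infty);W^{1,2}(\Omega))$; matching this weak limit with the a.e. limit identifies $\zeta = \ln(n+1)$, giving the first two lines of (\ref{eq:basic_convergence_props}).

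Next I would repeat this pattern for $c_\varepsilon$ and $u_\varepsilon$. For $c_\varepsilon$, the bounds (\ref{eq:c_monoticity}) and (\ref{eq:grad_c_bound}) give a uniform $L^\infty$ bound and an $L^2_\loc([0,\infty);W^{1,2}(\Omega))$ bound, and with (\ref{eq:dual_space_c_bound}) the same Aubin--Lions setup yields strong convergence in $L^2_\loc(\overline{\Omega}\times[0,\infty))$, a.e. convergence, and (after passing to a further subsequence) convergence $c_\varepsilon(\cdot,t) \to c(\cdot,t)$ in $\L{2}$ for a.e.\ $t > 0$; the gradient bound supplies the weak $W^{1,2}$ convergence. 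For $u_\varepsilon$, the bounds (\ref{eq:u_bound}) and (\ref{eq:grad_u_bound}) give a uniform $L^2_\loc([0,\infty);W^{1,2}_{0,\sigma}(\Omega))$ bound, and with (\ref{eq:dual_space_u_bound}) and the solenoidal chain $W^{1,2}_{0,\sigma}(\Omega) \hookrightarrow\hookrightarrow L^2_\sigma(\Omega) \hookrightarrow (W^{2,2}_{0,\sigma}(\Omega))^\star$, Aubin--Lions delivers strong $(L^2_\loc)^2$ and a.e.\ convergence, while the gradient bound gives the weak convergence in $L^2_\loc([0,\infty);W^{1,2}_{0,\sigma}(\Omega))$. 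The constraints on the limits then follow at once: $n \geq 0$ and $c \geq 0$ from the a.e.\ limits of the positive $n_\varepsilon$ and nonnegative $c_\varepsilon$, and $\div u = 0$ a.e.\ since $\div u_\varepsilon = 0$ is preserved under the weak $W^{1,2}$ convergence.

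The one point requiring care is that Aubin--Lions only produces compactness on each finite cylinder $\Omega\times(0,T)$, whereas (\ref{eq:basic_convergence_props}) is stated on all of $\Omega\times(0,\infty)$. I would handle this by a standard diagonal procedure over an exhausting sequence $T_k \nearrow \infty$, extracting nested subsequences on each $[0,T_k]$ and then passing to the diagonal so that a single sequence $\varepsilon_j \searrow 0$ realizes all seven convergences simultaneously on every $[0,T_k]$, hence locally on $[0,\infty)$. This diagonal bookkeeping, rather than any individual estimate, is the main (though entirely routine) obstacle; the identification of the weak $W^{1,2}$ limit of $\ln(n_\varepsilon+1)$ with $\ln(n+1)$ is then immediate from the already-established a.e.\ convergence and the continuity of $s \mapsto \ln(s+1)$.
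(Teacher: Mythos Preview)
Your proposal is correct and follows essentially the same approach as the paper: three applications of the Aubin--Lions lemma fed by the spatial bounds from \Cref{lemma:basic_props} and \Cref{lemma:basic_u_bounds} together with the dual-space time-derivative bounds from \Cref{lemma:dual_space_bounds}, followed by standard subsequence extraction and the obvious arguments for the sign and divergence constraints. You supply somewhat more detail than the paper does---in particular the Jensen/Poincar\'e justification for the $L^2((0,T);W^{1,2}(\Omega))$ bound on $\ln(n_\varepsilon+1)$ and the explicit diagonal argument over $T_k \nearrow \infty$---but these are exactly the routine steps the paper leaves implicit.
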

\begin{proof}
	By applying \Cref{lemma:basic_props} and \Cref{lemma:dual_space_bounds} and the Aubin-Lions lemma (cf.\ \cite{TemamNavierStokes}), we immediately gain relative compactness of the families $(\ln(n_\varepsilon + 1))_{\varepsilon \in (0,1)}$ and $(c_\varepsilon)_{\varepsilon \in (0,1)}$ in $L^2_\loc([0,\infty);W^{1,2}(\Omega))$ with respect to the weak topology and in $L^2_\loc([0,\infty);L^2(\Omega))$ and therefore $L^2_\loc(\overline{\Omega}\times[0,\infty))$ with respect to the strong topology. Moreover, by the boundedness properties presented in \Cref{lemma:basic_u_bounds} and \Cref{lemma:dual_space_bounds} and another application of the Aubin-Lions lemma to the triple of function spaces
	\[
		W^{1,2}_{0,\sigma}(\Omega) \subseteq L^2_\sigma(\Omega) \subseteq (W^{2,2}_{0,\sigma}(\Omega))^\star,
	\]
	we gain relative compactness of the family $(u_\varepsilon)_{\varepsilon \in (0,1)}$ in $(L^2_\loc(\overline{\Omega}\times[0,\infty)))^2$ with respect to the strong topology and in $L^2_\loc([0,\infty);W^{1,2}_{0,\sigma}(\Omega))$ with respect to the weak topology because of inequalities (\ref{eq:u_bound}), (\ref{eq:grad_u_bound}) and (\ref{eq:dual_space_u_bound}). By multiple standard subsequence extraction arguments, we can therefore construct a sequence $(\varepsilon_j)_{j\in\N}$ such that $\varepsilon_j \searrow 0$ as $j \rightarrow \infty$ with the convergence properties seen in (\ref{eq:basic_convergence_props}).
	\\[0.5em]
	Note that the non-negativity properties directly transfer from the approximate functions because of the pointwise convergence, while $\div u = 0$ is directly ensured by $u$ being an element of $L^2_\loc([0,\infty);W^{1,2}_{0,\sigma}(\Omega))$.
\end{proof}

\subsection{Stronger convergence properties for $(n_{\varepsilon_j})_{j\in\N}$ and $(c_{\varepsilon_j})_{j\in\N}$}
While the convergence properties outlined in \Cref{lemma:subsequence_extraction} would already take us quite far in proving that the limit functions found in said lemma are in fact solutions in the sense of \Cref{definition:weak_solution}, we will still need to prove that the sequences converge in slightly stronger ways. Our first target for this is the sequence $(n_{\varepsilon_j})_{j\in\N}$ as it thus far exhibits the weakest convergence properties. We therefore want to show now that $n_{\varepsilon_j}$ converges towards $n$ in at least some $L^1$ fashion. While we can find a proof for this in Reference \cite{WinklerStokesCase}, which should still work in our case without any modification, we want to present a somewhat shorter argument here based on our \Cref{lemma:nlogn_bound}.
\begin{lemma}
	\label{lemma:l1_convergence}
	Let the function $n$ and the sequence $(\varepsilon_j)_{j \in \N}$ be as in \Cref{lemma:subsequence_extraction}. Then 
	\[
		n_\varepsilon \rightarrow n \;\;\;\; \text{ in } L^1_\loc(\overline{\Omega}\times[0,\infty)) \text{ as }\varepsilon = \varepsilon_j \searrow 0.
	\]
\end{lemma}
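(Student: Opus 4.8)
The plan is to upgrade the almost everywhere convergence $n_\varepsilon \to n$ already furnished by \Cref{lemma:subsequence_extraction} to convergence in $L^1_\loc$ by means of Vitali's convergence theorem. On any bounded space--time cylinder $\Omega \times (0,T)$, which carries finite Lebesgue measure, Vitali's theorem asserts that almost everywhere convergence together with uniform integrability of the family $(n_{\varepsilon_j})_{j\in\N}$ already implies convergence in $L^1(\Omega\times(0,T))$. Since this holds for every $T > 0$, it yields precisely the asserted convergence in $L^1_\loc(\overline{\Omega}\times[0,\infty))$. The almost everywhere convergence is in hand, so the crux of the argument is to establish the required uniform integrability, and this is exactly where \Cref{lemma:nlogn_bound} enters.

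To obtain uniform integrability, I would invoke the de la Vallée--Poussin criterion with the superlinear weight $\Phi(s) \defs (s+1)\ln(s+1)$, which is nonnegative, convex and increasing on $[0,\infty)$ and satisfies $\Phi(s)/s \to \infty$ as $s\to\infty$. It therefore suffices to produce a bound of the form $\int_0^T\int_\Omega \Phi(n_{\varepsilon_j}) \leq C'(T)$ that is uniform in $j$. To extract such a bound from \Cref{lemma:nlogn_bound}, I would split
\[
	(n_\varepsilon + 1)\ln(n_\varepsilon + 1) = (n_\varepsilon + 1)\ln\left(\frac{n_\varepsilon + 1}{\overline{n_0}+1}\right) + \ln(\overline{n_0}+1)\,(n_\varepsilon + 1),
\]
integrate over $\Omega\times(0,T)$ and control the two resulting contributions separately: the first is bounded uniformly in $\varepsilon$ by \Cref{lemma:nlogn_bound}, while the second is merely linear in $n_\varepsilon$ and hence controlled by the mass conservation identity (\ref{eq:mass_perservation}) from \Cref{lemma:basic_props}, which fixes $\int_\Omega n_\varepsilon \equiv \int_\Omega n_0$ for all times (note that $\ln(\overline{n_0}+1) > 0$ since $n_0 \not\equiv 0$, so this term is a positive multiple of a uniformly bounded quantity). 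This gives the desired uniform bound on $\int_0^T\int_\Omega \Phi(n_{\varepsilon_j})$. Equivalently, one could argue uniform integrability by hand, splitting $\int_E n_\varepsilon$ over $\{n_\varepsilon \leq M\}$ and $\{n_\varepsilon > M\}$ and using $n_\varepsilon + 1 \leq (n_\varepsilon+1)\ln(n_\varepsilon+1)/\ln(M+1)$ on the latter set to make the tail small.

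The argument is largely routine, and the only point requiring mild care is the bookkeeping around the normalization in \Cref{lemma:nlogn_bound}: since $\ln\left(\frac{n_\varepsilon+1}{\overline{n_0}+1}\right)$ changes sign, the integrand there is not itself nonnegative, so one must verify that it nonetheless furnishes an \emph{upper} bound on the genuinely nonnegative superlinear quantity $\int_0^T\int_\Omega \Phi(n_{\varepsilon_j})$; the splitting above achieves exactly this, as both the entropy integral and the positive linear correction are bounded from above. I expect no serious obstacle beyond confirming the hypotheses of de la Vallée--Poussin uniformly in $\varepsilon$; once uniform integrability is secured, Vitali's theorem closes the argument immediately.
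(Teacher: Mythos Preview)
Your proposal is correct and follows essentially the same approach as the paper: both use the de la Vall\'ee--Poussin criterion fed by \Cref{lemma:nlogn_bound} to obtain uniform integrability, and then conclude via Vitali's theorem together with the a.e.\ convergence from \Cref{lemma:subsequence_extraction}. The only cosmetic difference is that the paper applies de la Vall\'ee--Poussin directly with $G(t)=(t+1)\ln\bigl(\tfrac{t+1}{\overline{n_0}+1}\bigr)$, whereas you first pass to the nonnegative $\Phi(s)=(s+1)\ln(s+1)$ by splitting off the linear correction; this extra care is harmless (indeed slightly cleaner), since $G$ is bounded below and the two formulations are equivalent.
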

\begin{proof}
	Fix a $T > 0$. For
	\[
		G(t) \defs (t+1)\ln\left(\frac{t+1}{\overline{n_0} + 1}\right) \;\;\;\; \forall t \geq 0,
	\]
	observe that
	\[
		\lim_{t \rightarrow \infty} \frac{G(t)}{t} \geq \lim_{t \rightarrow \infty}\ln\left(\frac{t+1}{\overline{n_0} + 1}\right) = \infty
	\]
	and there exists a constant $C(T) > 0$ such that
	\[
		\int_0^T \int_\Omega G(|n_\varepsilon|) = \int_0^T \int_\Omega (n_\varepsilon + 1)\ln\left(\frac{n_\varepsilon + 1}{\overline{n_0} + 1}\right) \leq C(T)
	\]
	for all $\varepsilon \in (0,1)$ by \Cref{lemma:nlogn_bound}. Therefore, the family $(n_\varepsilon)_{\varepsilon \in (0,1)}$ fulfills the De La Vallée Poussin criterion for uniform integrability (cf.\ \cite[p.24]{PropabilitiesAndPotential}).
	\\[0.5em]
	Because \Cref{lemma:subsequence_extraction} furthermore ensures pointwise convergence of the sequence $n_{\varepsilon_j}$ almost everywhere, all prerequisites for the Vitali convergence theorem (cf.\ \cite[p.23]{PropabilitiesAndPotential}) are therefore met and it directly provides us with the desired $L^1_\loc(\overline{\Omega}\times[0,\infty))$ convergence as $T > 0$ was arbitrary. 
\end{proof}\noindent
Though we have already ensured certain weak convergence properties for the first derivatives of the sequence $(c_{\varepsilon_j})_{j\in\N}$, they appear to be insufficient to handle the chemotaxis derived terms in (\ref{wsol:ln_n_inequality}) of \Cref{definition:weak_solution} when passing to the limit to show that $(n,c,u)$ is in fact a generalized solution in the sense of said definition. As the techniques used to show the following stronger convergence property for $(\grad c_{\varepsilon_j})_{j\in\N}$ are very similar to the ones seen in the proof of Lemma 4.4 of Reference \cite{WinklerStokesCase} and Lemma 8.2 of Reference \cite{WinklerLargeDataGeneralized}, we will only give the following sketch:
\begin{lemma}
	\label{lemma:grad_u_convergence}
	Let the function $c$ and the sequence $(\varepsilon_j)_{j \in \N}$ be as in \Cref{lemma:subsequence_extraction}.
	Then $c$ satisfies (\ref{wsol:c_equality}) for all $\varphi$ as in \Cref{definition:weak_solution} and 
	\[
		\grad c_{\varepsilon} \rightarrow \grad c \;\;\;\; \text{ in } L^2_\loc(\overline{\Omega}\times[0,\infty)) \text{ as }\varepsilon = \varepsilon_j \searrow 0.
	\]
\end{lemma}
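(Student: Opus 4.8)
The plan is to establish the stronger gradient convergence via a testing-and-limit argument that exploits the weak lower semicontinuity of the $L^2$ norm together with an upper estimate obtained by passing to the limit in the energy-type identity for $c_\varepsilon$. The first task is to verify that the limit $c$ satisfies the weak formulation \eqref{wsol:c_equality}. This should follow fairly directly from the convergence properties collected in \Cref{lemma:subsequence_extraction}: the linear diffusion term $\int \grad c_\varepsilon \cdot \grad \varphi$ passes to the limit by the weak $L^2_\loc([0,\infty);W^{1,2}(\Omega))$ convergence of $c_\varepsilon$, the convective term $\int c_\varepsilon(u_\varepsilon \cdot \grad\varphi)$ passes by combining strong $L^2_\loc$ convergence of $c_\varepsilon$ with strong $L^2_\loc$ convergence of $u_\varepsilon$, and the reaction term $\int n_\varepsilon f(c_\varepsilon)\varphi$ passes by using the strong $L^1_\loc$ convergence of $n_\varepsilon$ from \Cref{lemma:l1_convergence}, the pointwise (hence, with the uniform $L^\infty$ bound \eqref{eq:c_monoticity}, dominated) convergence of $f(c_\varepsilon)$, and a suitable splitting.

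For the gradient convergence itself, I would test the second equation in \eqref{approx_system} with $c_\varepsilon$ to obtain the standard energy identity
\[
	\frac{1}{2}\int_\Omega c_\varepsilon^2(\cdot,T) + \int_0^T\!\!\int_\Omega |\grad c_\varepsilon|^2 = \frac{1}{2}\int_\Omega c_0^2 - \int_0^T\!\!\int_\Omega n_\varepsilon c_\varepsilon f(c_\varepsilon),
\]
where the convective contribution vanishes because $\div u_\varepsilon = 0$ and $u_\varepsilon = 0$ on $\partial\Omega$. The corresponding identity for the limit $c$ (testing \eqref{wsol:c_equality} with $c$ itself, justified by the regularity class in \eqref{wsol:regularity}) reads
\[
	\frac{1}{2}\int_\Omega c^2(\cdot,T) + \int_0^T\!\!\int_\Omega |\grad c|^2 = \frac{1}{2}\int_\Omega c_0^2 - \int_0^T\!\!\int_\Omega n c\, f(c).
\]
The plan is to pass to the limit in the right-hand side of the $\varepsilon$-identity: the term $\int_0^T\!\int_\Omega n_\varepsilon c_\varepsilon f(c_\varepsilon)$ should converge to $\int_0^T\!\int_\Omega n c\, f(c)$ using the $L^1_\loc$ convergence of $n_\varepsilon$ together with the uniform boundedness of $c_\varepsilon f(c_\varepsilon)$ (a consequence of \eqref{eq:c_monoticity} and the continuity of $f$) and its a.e.\ convergence. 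Combined with $\int_\Omega c_\varepsilon^2(\cdot,T) \to \int_\Omega c^2(\cdot,T)$ for a.e.\ $T$, this yields $\int_0^T\!\int_\Omega |\grad c_\varepsilon|^2 \to \int_0^T\!\int_\Omega |\grad c|^2$, i.e.\ convergence of the $L^2$ norms. Since weak $L^2_\loc$ convergence of $\grad c_\varepsilon$ toward $\grad c$ is already known, convergence of norms upgrades this to strong convergence in $L^2_\loc(\overline{\Omega}\times[0,\infty))$.

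\textbf{The main obstacle} I anticipate is making the passage to the limit in the dissipation identity fully rigorous, since the identity for the limit must be derived from the weak formulation \eqref{wsol:c_equality} rather than assumed, and the time-slice terms $\int_\Omega c_\varepsilon^2(\cdot,T)$ require care: one only has a.e.\ convergence of $c_\varepsilon(\cdot,t)$ in $L^2(\Omega)$ for a.e.\ $t$, so the endpoint evaluation must be handled for a suitable sequence of times $T$ (or via a weak-in-time test function approximating the indicator of $(0,T)$, using the time-derivative bound \eqref{eq:dual_space_c_bound} to control boundary contributions). A delicate point is the admissibility of testing \eqref{wsol:c_equality} with $\varphi = c$ itself, which needs the temporal regularity $c_t \in L^2$ and a density argument; the sketch in the references should justify this, and I would cite the analogous steps in \cite{WinklerStokesCase} and \cite{WinklerLargeDataGeneralized} rather than reproduce them. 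Once these limit passages are secured, the strong gradient convergence follows from the general principle that weak convergence plus convergence of norms gives strong convergence in a Hilbert space.
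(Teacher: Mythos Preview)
Your approach is essentially the same as the paper's: verify \eqref{wsol:c_equality} by passing to the limit, then combine weak lower semicontinuity of the $L^2$ norm of $\grad c_\varepsilon$ with an upper bound obtained from the energy identity for $c_\varepsilon$ and a corresponding relation for the limit $c$, concluding strong convergence from norm convergence plus weak convergence.

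One point worth sharpening: you write down an \emph{identity} for the limit $c$, but with only the regularity in \eqref{wsol:regularity} you cannot directly use $\varphi=c$ in \eqref{wsol:c_equality} (you yourself flag the missing $c_t\in L^2$). The paper, following \cite[Lemma~8.1]{WinklerLargeDataGeneralized}, inserts time-averaged (Steklov-type) approximations of $c$ as test functions and, after passing to the limit in the averaging parameter at Lebesgue points of $t\mapsto\int_\Omega c^2$, obtains only the \emph{inequality}
\[
	\frac{1}{2}\int_\Omega c^2(\cdot,T) - \frac{1}{2}\int_\Omega c_0^2 + \int_0^T\!\!\int_\Omega |\grad c|^2 \;\geq\; -\int_0^T\!\!\int_\Omega n\,c\,f(c).
\]
This one-sided estimate is all that is needed: together with the exact identity for $c_\varepsilon$ and the convergences you list, it yields $\limsup \int_0^T\!\int_\Omega |\grad c_\varepsilon|^2 \leq \int_0^T\!\int_\Omega |\grad c|^2$, which combined with weak lower semicontinuity gives norm convergence. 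Since you already plan to defer this step to \cite{WinklerStokesCase} and \cite{WinklerLargeDataGeneralized}, your outline is correct; just be aware that what those references actually deliver is an inequality, not an identity.
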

\begin{proof}
	Note first that the convergence properties seen in \Cref{lemma:subsequence_extraction} are already sufficient to show that $c$ in fact satisfies (\ref{wsol:c_equality}) for appropriate $\varphi$ by showing that all $c_\varepsilon$ do this and then passing to the limit. We will therefore not further expand on this as it is a fairly straightforward limit process. Nonetheless this property of $c$ is in fact the key to proving the desired convergence property for the gradients $\grad c_\varepsilon$.
	\\[0.5em]
	We first note that the convergence properties for $c$ shown in \Cref{lemma:subsequence_extraction} already give us that
	\[
		\int_0^T\int_\Omega |\grad c|^2 \leq \liminf_{\varepsilon=\varepsilon_j \searrow 0} \int_0^T\int_\Omega |\grad c_\varepsilon|^2
	\]
	for all $T > 0$ and we therefore only need to show that a similar estimate from below also holds.
	\\[0.5em]
	This is essentially a two step process: Firstly, we construct a family of test functions $\varphi$ that are essentially time averaged versions of $c$ with approximated initial data, which sufficiently approximate $c$ itself. This ensures the necessary regularity for the family of test functions to be used in (\ref{wsol:c_equality}) and after some limit processes we gain that
	\begin{equation}
		\frac{1}{2} \int_\Omega c^2(\cdot, T) - \frac{1}{2} \int_\Omega c_0^2 + \int_0^T \int_\Omega |\grad c|^2 \geq - \int_0^T \int_\Omega ncf(c)
		\label{eq:c_lower_bound_prep}
	\end{equation}
	for all $T \in (0,\infty)\setminus N$ whereby $N$ is a null set such that
	$(0,\infty)\setminus N$ contains only Lebesgue points of the map
	\[
		(0,\infty) \rightarrow [0,\infty), \;\; t \mapsto \int_\Omega c^2(x,t) \d x.
	\]
	Only considering these Lebsgue points is needed to ensure that some of the time averages converge properly. For all the details, see e.g.\ Lemma 8.1 in Reference \cite{WinklerLargeDataGeneralized}. 
\\[0.5em]
	Secondly by potentially enlarging $N$, we can further assume that outside of $N$ the integral $\int_\Omega c_\varepsilon(\cdot, t)^2$ converges to $\int_\Omega c(\cdot, t)^2$ as $\varepsilon = \varepsilon_j \searrow 0$ because of \Cref{lemma:subsequence_extraction} without loss of generality.
	Combining this with (\ref{eq:c_lower_bound_prep}), uniform $L^\infty$ boundedness of the family $(c_\varepsilon)_{\varepsilon \in (0,1)}$ from \Cref{lemma:basic_props} and the $L^1$ convergence property from \Cref{lemma:l1_convergence} then yields that
	\begin{align*}
		\int_0^T\int_\Omega |\grad c|^2 &\geq -\frac{1}{2} \int_\Omega c^2(\cdot, T) + \frac{1}{2} \int_\Omega c_0^2 - \int_0^T \int_\Omega ncf(c) \\
		&=\lim_{\varepsilon=\varepsilon_j \searrow 0} \left\{ -\frac{1}{2} \int_\Omega c_\varepsilon^2(\cdot, T) + \frac{1}{2} \int_\Omega c_0^2 - \int_0^T \int_\Omega n_\varepsilon c_\varepsilon f(c_\varepsilon) \right\} =\lim_{\varepsilon=\varepsilon_j \searrow 0} \int_0^T \int_\Omega |\grad c_\varepsilon|^2
	\end{align*}
	for all $T \in (0,\infty)\setminus N$. This is exactly the needed lower bound and therefore completes the proof.
\end{proof}

\section{Proof of Theorem 1.1}
Having now prepared all the necessary tools, we will prove the central theorem of this paper:
\begin{proof}[Proof of Theorem 1.1]
	We use the triple of limit functions $(n,c,u)$ found in \Cref{lemma:subsequence_extraction}
	as candidates for our generalized solution in the sense of \Cref{definition:weak_solution}. We have already established some of the properties needed for \Cref{definition:weak_solution} in \Cref{lemma:subsequence_extraction}, \Cref{lemma:grad_u_convergence} and therefore now only need to show that (\ref{wsol:regularity}),
	(\ref{wsol:mass_perservation}), (\ref{wsol:ln_n_inequality}) and (\ref{wsol:u_equality}) also hold.
	The mass preservation property (\ref{wsol:mass_perservation}) follows directly from our $L^1$ convergence result in \Cref{lemma:l1_convergence} and the mass preservation property of the approximate solutions seen in \Cref{lemma:basic_props}. 
	This together with  \Cref{lemma:l1_convergence} then further ensures that $n$ is of the appropriate regularity for (\ref{wsol:regularity}), while the remaining regularity properties for $c$ and $u$ are provided by the convergence properties in \Cref{lemma:subsequence_extraction}, the uniform $L^\infty$ bound for the sequence $(c_{\varepsilon_j})_{j\in\N}$ in \Cref{lemma:basic_props} and the uniform $L^2$ bound for the sequence $(u_{\varepsilon_j})_{j\in\N}$ in \Cref{lemma:basic_u_bounds}.
	\\[0.5em]
	It is further easy to see that the approximate solutions satisfy both (\ref{wsol:ln_n_inequality}) and
	(\ref{wsol:u_equality}) by partial integration and use of the boundary conditions in (\ref{approx_system}). We therefore only need to further argue that these properties survive the necessary limit process. For most terms in the integral equality (\ref{wsol:u_equality}) concerning $u$, this is fairly straightforward to show using the convergence properties established in \Cref{lemma:subsequence_extraction}, but we nonetheless give the full argument for at least the newly introduced term (compared to the Stokes case) as an example. This is especially pertinent as we needed to establish stronger convergence properties for $u$ to handle this term compared to \cite{WinklerStokesCase}, namely strong $L^2$ as opposed to weak $L^2$ convergence. 
	\\[0.5em]
	We first fix a $\varphi \in C^\infty_0(\Omega\times[0,\infty);\R^2)$. Then there exists $T > 0$ such that
	\[
	\text{supp}(\varphi) \subseteq \Omega\times[0,T].
	\] 
	We now observe that
	\begin{align*}
	&\left|\,\int_0^\infty\int_\Omega (u_\varepsilon \otimes u_\varepsilon)\cdot \grad \varphi - \int_0^\infty\int_\Omega (u \otimes u)\cdot \grad \varphi \,\right| \\
	&\leq \|\grad \varphi\|_{L^{\infty}(\Omega)}\left[\;\int_0^T\int_\Omega |u_\varepsilon - u||u_\varepsilon| + \int_0^T\int_\Omega |u| |u_\varepsilon - u| \;\right] \\
	&\leq \|\grad \varphi\|_{L^{\infty}(\Omega)}\left[ \;\|u_\varepsilon - u\|_{L^{2}(\Omega\times(0,T))}\|u_\varepsilon\|_{L^{2}(\Omega\times(0,T))} + \|u\|_{L^{2}(\Omega\times(0,T))} \|u_\varepsilon - u\|_{L^{2}(\Omega\times(0,T))}\; \right]
	\end{align*}
	for all $\varepsilon \in (0,1)$, which ensures that
	\begin{equation*}
	\int_0^\infty\int_\Omega (u_\varepsilon \otimes u_\varepsilon)\cdot \grad \varphi \rightarrow  \int_0^\infty\int_\Omega (u \otimes u) \cdot \grad \varphi \stext{ as } \varepsilon = \varepsilon_j \searrow 0
	\end{equation*}
	because of the $L_\loc^2(\Omega\times[0,\infty))$ convergence of the sequence $(u_{\varepsilon_j})_{j\in\N}$ towards $u$. 
	\\[0.5em]
    As it has been pretty thoroughly discussed in the proof of Theorem 1.1 of Reference \cite{WinklerStokesCase} that (\ref{wsol:ln_n_inequality}) similarly survives a corresponding limit process given similar convergence properties as proven here, we will not go into further depth regarding this point and refer the reader to the reference.
\end{proof}
\section*{Acknowledgement} The author acknowledges support of the \emph{Deutsche Forschungsgemeinschaft} in the context of the project \emph{Emergence of structures and advantages in cross-diffusion systems}, project number 411007140.

\end{document}